\theoremstyle{plain} 
\newtheorem{theorem}{Theorem}[section]
\newtheorem{proposition}[theorem]{Proposition}
\newtheorem{corollary}[theorem]{Corollary}
\theoremstyle{definition}
\newtheorem{definition}[theorem]{Definition}
\theoremstyle{remark} 
\newtheorem{remark}[theorem]{Remark}
\theoremstyle{definition}
\newtheorem{example}{Example}
\theoremstyle{lemma}
\newtheorem{lemma}[theorem]{Lemma}
\begin{document}
\title{Roundabout Constrained Convex Generators: A Unified Framework \\ for Multiply-Connected Reachable Sets}

\author{Peng Xie, Sabin Diaconescu, Florin Stoican, Amr Alanwar
\thanks{
Peng Xie, Amr Alanwar are with the TUM School of Computation, Information and Technology, Department of Computer Engineering, Technical University of Munich, Heilbronn, Germany. 
Email: \{p.xie, alanwar\}@tum.de\newline
Sabin Diaconescu, Florin Stoican are with the Nat. Univ. of Science and Technology Politehnica Bucharest, Romania.
Email: \{sabin.diaconescu@stud.acs,florin.stoican@\}.upb.ro
}
}

\maketitle

\begin{abstract}
This paper introduces Roundabout Constrained Convex Generators (RCGs), a set representation framework for modeling multiply connected regions in control and verification applications. The RCG representation extends the constrained convex generators framework by incorporating an inner exclusion zone, creating sets with topological holes that naturally arise in collision avoidance and safety-critical control problems. We present two equivalent formulations: a set difference representation that provides geometric intuition and a unified parametric representation that facilitates computational implementation. The paper establishes closure properties under fundamental operations, including linear transformations, Minkowski sums, and intersections with convex generator sets. We derive special cases, including roundabout zonotopes and roundabout ellipsotopes, which offer computational advantages for specific norm selections. The framework maintains compatibility with existing optimization solvers while enabling the representation of non-convex feasible regions that were previously challenging to model efficiently.
\end{abstract}

\section{Introduction} In the domain of safety-critical autonomous systems, such as autonomous driving, robot navigation, and unmanned aerial vehicle (UAV) swarm control, the precise and efficient modeling of system states is a cornerstone for achieving reliable operation. Many real-world tasks not only require the system to remain within a specific safe region but also demand that it actively avoids a series of ``forbidden zones'' or obstacles~\cite{girard2005reachability}. These constraints collectively define a complex, often \emph{non-convex}, and even \emph{multiply connected} feasible state space. Devising a mathematical representation for such complex sets with ``holes'' that is compact, efficient, and compatible with modern optimization algorithms remains a long-standing challenge in control theory and formal verification.

A variety of methods exist for representing non-convex sets. For instance, approaches like constrained polynomial zonotopes~\cite{kochdumper2023constrained} or hybrid systems (e.g., hybrid zonotopes~\cite{bird2023hybrid}) offer powerful tools for capturing complex, non-convex geometries. Similarly, unions of multiple convex sets (such as constrained zonotopes~\cite{scott2016constrained} or ellipsotopes~\cite{kousik2022ellipsotopes}) can approximate some non-convex shapes~\cite{xie2025hybrid}. However, while these formalisms are adept at handling general non-convexity, they often lack a dedicated and compact structure to efficiently represent \textit{multiply connected topologies}~\cite{walsh1956conformal}. Approximating a region with a hole might require a large number of convex sets in a union~\cite{bird2021unions}, or a high-degree polynomial, which can lead to a combinatorial explosion in computational complexity during subsequent analysis and synthesis~\cite{althoff2013reachability,althoff2010reachability,alanwar2023data,alanwar2022data,xie2025data}. 

To address this gap, we introduce the \textit{Roundabout Constrained Convex Generators (RCG)}, a set representation whose geometry mirrors a traffic roundabout: a navigable outer boundary with an inaccessible central region. As in Fig.~\ref{fig:roundabout_traffic}, admissible trajectories must circulate within the outer domain while avoiding the inner exclusion zone; such topology arises in collision avoidance, robust control with keep-out regions, and motion planning problems where specific areas must be circumnavigated~\cite{xie2025informed}.  Throughout this work, we restrict attention to feasible regions in arbitrary ambient dimension that contain exactly one topological hole. In the planar case ($\mathbb{R}^2$), such a set is a \emph{doubly connected domain}. The RCG’s \emph{outer} feasible domain is encoded by \emph{Constrained Convex Generators (CCG)}~\cite{silverccg2022}, preserving solver-friendly structure (e.g., compatibility with affine maps and standard convex--algebraic operations~\cite{silvestre2023exact,kousik2022ellipsotopes}), while an \emph{inner} exclusion constraint carves out the characteristic hole. Looking ahead, we will extend this concept to a \emph{hybrid} setting---i.e., hybrid roundabout constrained convex generators---to systematically address cases with $k$ holes, quantified via Betti numbers~\cite{ghrist2008barcodes}, thereby accommodating unions of obstacles that may partially overlap.
\begin{figure}[h]
\centering
\includegraphics[width=0.45\textwidth]{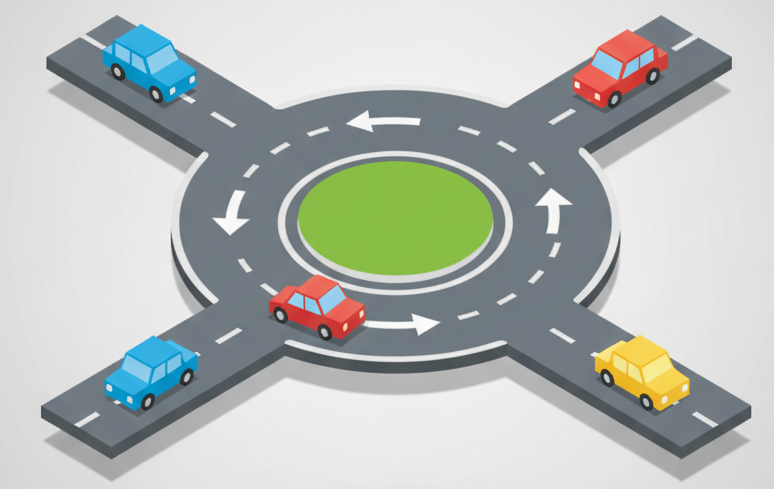}
\caption{A traffic roundabout: admissible motion (gray) circulates around an inaccessible island (green).}
\label{fig:roundabout_traffic}
\end{figure}


This unified construction yields a compact, dimension-agnostic model of annular/ring-like feasible regions that generalizes beyond spherical/ellipsoidal keep-outs. In this paper, we formally define RCGs, show that intersections of RCGs remain within the class, and demonstrate how these properties enable systematic reachability analysis and formal verification.

 The main contributions of this paper are: (i) we formally propose and define the RCG as a closed-form representation for multiply connected feasible regions; (ii) we derive algebraic properties under fundamental set operations and establish closure results (e.g., intersection), laying the groundwork for controller synthesis and reachability analysis.  We release an open-source implementation at
  \url{git@github.com:TUM-CPS-HN/RCG.git}.

The remainder of this paper is organized as follows: Section~II presents the basic preliminaries used throughout the paper. Section~III details the mathematical definition and properties of the RCG. Section~IV develops basic set operations for the RCG, including linear transformations, Minkowski sums, and intersections. Section~V concludes the paper.

\section{Preliminaries}
This section introduces the mathematical preliminaries that underpin the proposed RCG framework.

\subsection{Notation}

We denote vectors by bold lowercase letters, e.g., $\mathbf{x}, \mathbf{c} \in \mathbb{R}^n$, and matrices by uppercase letters, e.g., $\mathbf{G} \in \mathbb{R}^{n \times m}$. For Greek letters representing vectors, we use slanted symbols, e.g., $\bm{\beta} \in \mathbb{R}^m$. For $p \in [1, \infty)$, the $p$-norm is defined as $\|\mathbf{x}\|_p = \left(\sum_{i=1}^n |x_i|^p\right)^{1/p}$, with the standard extension $\|\mathbf{x}\|_\infty = \max_i |x_i|$. The Minkowski sum of two sets $\mathcal{X}, \mathcal{Y} \subseteq \mathbb{R}^n$ is defined as $\mathcal{X} \oplus \mathcal{Y} = \{ \mathbf{x} + \mathbf{y} \mid \mathbf{x} \in \mathcal{X}, \mathbf{y} \in \mathcal{Y} \}$.

\subsection{Constrained Convex Generators}

We begin by defining the Constrained Convex Generator (CCG), a set representation that forms the basis of our proposed framework.

\begin{definition}[Constrained Convex Generator \cite{silverccg2022}]
\label{def:ccg}
A Constrained Convex Generator ($\mathcal{CG}$) is a set defined as:
\begin{equation}
\label{eq:ccg_def}
\mathcal{CG} = \left\{ \mathbf{c} + \mathbf{G}\bm{\beta} \;\Big|\; \|\bm{\beta}_{\mathcal{J}_i}\|_{p_i} \leq 1, \forall i \in \{1,\ldots,k\}, \mathbf{A}\bm{\beta} = \mathbf{b} \right\},
\end{equation}
where:
\begin{itemize}
\item $\mathbf{c} \in \mathbb{R}^n$ is the center of the set
\item $\mathbf{G} \in \mathbb{R}^{n \times m}$ is the generator matrix, with columns $\mathbf{g}_j$
\item $\bm{\beta} \in \mathbb{R}^m$ is the coefficient vector
\item $\mathcal{J} = \{\mathcal{J}_1, \mathcal{J}_2, \ldots, \mathcal{J}_k\}$ is a partition of the index set $\{1, 2, \ldots, m\}$
\item $\mathcal{P} = \{p_1, p_2, \ldots, p_k\}$ with $p_i \in [1, \infty]$ are the norms corresponding to each partition
\item $\mathbf{A} \in \mathbb{R}^{q \times m}$ and $\mathbf{b} \in \mathbb{R}^q$ define a set of linear constraints on the coefficients
\end{itemize}
For brevity, we use the tuple $\langle \mathbf{c}, \mathbf{G}, \mathcal{J}, \mathcal{P}, \mathbf{A}, \mathbf{b} \rangle$ to denote a $\mathcal{CG}$.
\end{definition}

CCGs are closed under several important operations, making them suitable for reachability analysis.

\begin{proposition}[Fundamental Operations \cite{silvestre2023exact}]
\label{prop:fundamental_ops}
Let $\mathcal{CG}_1 = \langle \mathbf{c}_1, \mathbf{G}_1, \mathcal{J}_1, \mathcal{P}_1, \mathbf{A}_1, \mathbf{b}_1 \rangle$ and $\mathcal{CG}_2 = \langle \mathbf{c}_2, \mathbf{G}_2, \mathcal{J}_2, \mathcal{P}_2, \mathbf{A}_2, \mathbf{b}_2 \rangle$ be two CCGs in $\mathbb{R}^n$, where $\mathbf{G}_1 \in \mathbb{R}^{n \times m_1}$.

\begin{enumerate}
\item \textit{Minkowski Sum:} Their Minkowski sum $\mathcal{CG}_1 \oplus \mathcal{CG}_2$ is a CCG given by $\langle \mathbf{c}, \mathbf{G}, \mathcal{J}, \mathcal{P}, \mathbf{A}, \mathbf{b} \rangle$, where:
\begin{align}
\mathbf{c} &= \mathbf{c}_1 + \mathbf{c}_2, \quad 
\mathbf{G} = [\mathbf{G}_1 \quad \mathbf{G}_2], \\
\mathbf{A} &= \begin{bmatrix} 
\mathbf{A}_1 & \mathbf{0} \\ 
\mathbf{0} & \mathbf{A}_2 
\end{bmatrix}, \quad 
\mathbf{b} = \begin{bmatrix} 
\mathbf{b}_1 \\ 
\mathbf{b}_2 
\end{bmatrix},
\end{align}
and $\mathcal{J}, \mathcal{P}$ are the appropriately concatenated and index-shifted partitions and norm sets.

\item \textit{Linear Transformation:} For a matrix $\mathbf{T} \in \mathbb{R}^{p \times n}$, the set $\mathbf{T}(\mathcal{CG}_1)$ is a CCG given by:
\begin{equation}
\mathbf{T}(\mathcal{CG}_1) = \langle \mathbf{T}\mathbf{c}_1, \mathbf{T}\mathbf{G}_1, \mathcal{J}_1, \mathcal{P}_1, \mathbf{A}_1, \mathbf{b}_1 \rangle.
\end{equation}
\end{enumerate}
\end{proposition}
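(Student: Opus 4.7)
The plan is to establish both parts by exhibiting the explicit tuple on the right-hand side and then verifying set equality through a double inclusion at the level of the parametric variable $\bm{\beta}$.

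For the Minkowski sum, I would start from a generic $\mathbf{x} \in \mathcal{CG}_1 \oplus \mathcal{CG}_2$, so that $\mathbf{x} = \mathbf{c}_1 + \mathbf{G}_1 \bm{\beta}^{(1)} + \mathbf{c}_2 + \mathbf{G}_2 \bm{\beta}^{(2)}$ for some $\bm{\beta}^{(i)}$ satisfying the norm and linear constraints of $\mathcal{CG}_i$. I would then form the stacked coefficient $\bm{\beta} = [\bm{\beta}^{(1)\top},\, \bm{\beta}^{(2)\top}]^\top \in \mathbb{R}^{m_1+m_2}$, observe that $\mathbf{x} = (\mathbf{c}_1+\mathbf{c}_2) + [\mathbf{G}_1\ \mathbf{G}_2]\bm{\beta}$, and verify that the block-diagonal system $\mathbf{A}\bm{\beta} = \mathbf{b}$ is equivalent to $\mathbf{A}_i\bm{\beta}^{(i)} = \mathbf{b}_i$ for $i=1,2$, simply because the two blocks act on disjoint coordinate ranges. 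The norm constraints require the index-shift step: partitions in $\mathcal{J}_2$ must be translated by $m_1$ so that they now refer to the second block of $\bm{\beta}$; after this relabeling, the concatenated partition $\mathcal{J}$ with norm list $\mathcal{P}$ exactly reproduces the original per-factor constraints $\|\bm{\beta}^{(i)}_{\mathcal{J}_{i,j}}\|_{p_{i,j}} \leq 1$. The reverse inclusion is immediate: given $\bm{\beta}$ satisfying the stacked constraints, split it into its first $m_1$ and last $m_2$ components and read off memberships in $\mathcal{CG}_1$ and $\mathcal{CG}_2$ from the decoupled block structure.

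For the linear transformation, the argument is essentially substitution: any $\mathbf{x} = \mathbf{c}_1 + \mathbf{G}_1\bm{\beta} \in \mathcal{CG}_1$ is sent by $\mathbf{T}$ to $\mathbf{T}\mathbf{c}_1 + \mathbf{T}\mathbf{G}_1\bm{\beta}$, and the constraint set on $\bm{\beta}$ (both the norm constraints indexed by $\mathcal{J}_1,\mathcal{P}_1$ and the linear constraints $\mathbf{A}_1\bm{\beta} = \mathbf{b}_1$) is untouched since it is defined purely in coefficient space. Both inclusions $\mathbf{T}(\mathcal{CG}_1) \subseteq \langle \mathbf{T}\mathbf{c}_1, \mathbf{T}\mathbf{G}_1, \mathcal{J}_1, \mathcal{P}_1, \mathbf{A}_1, \mathbf{b}_1\rangle$ and the reverse follow by choosing the same $\bm{\beta}$ on either side.

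The main obstacle is not any deep step but the bookkeeping in the Minkowski case: one must be careful that the index shift of $\mathcal{J}_2$ by $m_1$ is performed consistently, that the zero off-diagonal blocks in $\mathbf{A}$ truly decouple the two linear systems, and that no interaction is introduced between the factors' constraints when they share the common $\bm{\beta}$. Once this reindexing is made explicit, both closure statements reduce to straightforward double-inclusion verifications.
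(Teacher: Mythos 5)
Your argument is correct: the coefficient-stacking double inclusion for the Minkowski sum (with the block-diagonal decoupling of $\mathbf{A}$ and the $m_1$-shift of the partitions in $\mathcal{J}_2$) and the direct substitution for the linear map are exactly the standard proofs of these closure properties. Note that the paper itself states this proposition as a cited result from the CCG literature and gives no proof, so there is no in-paper argument to compare against; your proposal fills that gap with the expected reasoning and contains no errors.
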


As reviewed in~\cite{silverccg2022}, CCGs admit exact, closed-form representations for \emph{Minkowski sums}, \emph{affine/linear transformations}, and \emph{intersections}. Building on these standard operations, we now provide a new closed-form construction, for the intersection of a CCG with a halfspace.

\begin{proposition}[Halfspace Intersection]
\label{prop:halfspace}
Let $\mathcal{CG} = \langle \mathbf{c}, \mathbf{G}, \mathcal{J}, \mathcal{P}, \mathbf{A}, \mathbf{b} \rangle$ be a CCG in $\mathbb{R}^n$ with $m$ generators. Let $\mathcal{H} = \{\mathbf{x} \in \mathbb{R}^n : \mathbf{h}^T\mathbf{x} \leq f\}$ be a halfspace. Define the scalar representing the maximum slack:
\begin{equation}
\label{eq:dmax}
d_{\max} \;=\; f - \mathbf{h}^T\mathbf{c} + \sum_{j=1}^{m} \big|\mathbf{h}^T\mathbf{g}_j\big|.
\end{equation}
Then:
\begin{enumerate}
\item The intersection $\mathcal{CG} \cap \mathcal{H}$ is empty if and only if $d_{\max} < 0$.

\item If $d_{\max} \geq 0$, the intersection is exactly represented by a new CCG, $\mathcal{CG}' = \langle \mathbf{c}, \mathbf{G}', \mathcal{J}', \mathcal{P}', \mathbf{A}', \mathbf{b}' \rangle$, where:
\begin{align}
\label{eq:Gprime}
\mathbf{G}' &= [\mathbf{G} \;\; \mathbf{0}_{n \times 1}] \in \mathbb{R}^{n \times (m+1)} ,\\
\label{eq:JprimePprime}
\mathcal{J}' &= \mathcal{J} \cup \{m{+}1\}, \quad \mathcal{P}' = \mathcal{P} \cup \{\infty\}, \\
\label{eq:Aprime}
\mathbf{A}' &= \begin{bmatrix} 
\mathbf{A} & \mathbf{0} \\ 
\mathbf{h}^T\mathbf{G} & d_{\max}/2 
\end{bmatrix} \in \mathbb{R}^{(q+1) \times (m+1)} ,\\
\label{eq:bprime}
\mathbf{b}' &= \begin{bmatrix} 
\mathbf{b} \\ 
f - \mathbf{h}^T\mathbf{c} - d_{\max}/2 
\end{bmatrix} \in \mathbb{R}^{q+1}.
\end{align}
Equivalently, the augmented linear constraint for $\widetilde{\bm{\beta}}$ reads
\begin{equation}
\label{eq:aug_constraint}
\mathbf{A}'\,\widetilde{\bm{\beta}} \;=\; \mathbf{b}'.
\end{equation}
\end{enumerate}
\end{proposition}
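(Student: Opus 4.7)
The plan is to convert the halfspace inequality $h^T x \le f$ into a CCG-style equality constraint by introducing a bounded slack variable, then to encode that slack as a single extra generator governed by an $\infty$-norm partition. Any $x = \mathbf{c} + \mathbf{G}\bm{\beta} \in \mathcal{CG}$ obeys $\|\bm{\beta}_{\mathcal{J}_i}\|_{p_i} \le 1$ with $p_i \ge 1$, which forces $|\beta_j| \le 1$ componentwise (since $\|\cdot\|_\infty \le \|\cdot\|_{p_i}$), and hence $|\mathbf{h}^T \mathbf{G}\bm{\beta}| \le \sigma := \sum_j |\mathbf{h}^T \mathbf{g}_j|$. Consequently the slack $s := f - \mathbf{h}^T \mathbf{c} - \mathbf{h}^T \mathbf{G}\bm{\beta}$ always satisfies $s \le d_{\max}$, which already suggests $d_{\max}$ as the natural upper bound in the subsequent reparametrization.

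For part (1), the ``$\Leftarrow$'' direction is immediate: if $d_{\max} < 0$ then for every feasible $\bm{\beta}$, $\mathbf{h}^T x = \mathbf{h}^T \mathbf{c} + \mathbf{h}^T \mathbf{G}\bm{\beta} \ge \mathbf{h}^T \mathbf{c} - \sigma = f - d_{\max} > f$, ruling out any point of the intersection. I would establish the converse by invoking the explicit construction of part (2): when $d_{\max} \ge 0$, the augmented system defining $\mathcal{CG}'$ is consistent whenever $\mathcal{CG}$ itself is consistent, and its projection onto the first $m$ coefficients realizes a point of $\mathcal{CG} \cap \mathcal{H}$. (This direction relies on the implicit standing assumption that $\mathcal{CG}$ is nonempty; without it the ``iff'' is delicate.)

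The heart of the proof is part (2), which I would handle by double inclusion after reparametrizing $s = \tfrac{d_{\max}}{2}(1 + \beta_{m+1})$ with the new coefficient $\beta_{m+1}$ sitting in the added $\infty$-norm partition $\{m{+}1\}$; this affine map bijects $\beta_{m+1} \in [-1,1]$ onto $s \in [0, d_{\max}]$. Substituting into $\mathbf{h}^T \mathbf{G}\bm{\beta} + s = f - \mathbf{h}^T \mathbf{c}$ reproduces exactly the augmented row of $\mathbf{A}' \widetilde{\bm{\beta}} = \mathbf{b}'$ in \eqref{eq:Aprime}--\eqref{eq:bprime}, while $\mathbf{G}' = [\mathbf{G}\ \mathbf{0}]$ in \eqref{eq:Gprime} ensures that $\beta_{m+1}$ is a pure bookkeeping variable that does not perturb $x$. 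Given $x \in \mathcal{CG} \cap \mathcal{H}$, I construct $\widetilde{\bm{\beta}} = (\bm{\beta}, \beta_{m+1})$ by choosing $\beta_{m+1}$ from $s = f - \mathbf{h}^T x \in [0, d_{\max}]$; conversely, the block structure of $\mathbf{A}'$ preserves $\mathbf{A}\bm{\beta} = \mathbf{b}$ and the original partition norms, while the last row together with $|\beta_{m+1}|\le 1$ forces $s \ge 0$, hence $\mathbf{h}^T x \le f$.

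The main technical point is the specific affine split $d_{\max}/2 \pm (d_{\max}/2)\beta_{m+1}$: this centering is precisely what makes the feasible slack interval $[0, d_{\max}]$ symmetric around $d_{\max}/2$ and thus representable by a single scalar with $\|\cdot\|_\infty$-ball $[-1,1]$. Any other centering would either exclude the halfspace boundary $s=0$ or admit infeasible slacks, so verifying this consistency is the only non-routine bookkeeping; the remainder---appending a new partition, shifting indices, and zero-padding $\mathbf{A}$---mirrors the standard CCG operations of Proposition~\ref{prop:fundamental_ops}.
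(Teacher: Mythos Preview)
Your treatment of part~(2) is essentially identical to the paper's: both introduce the slack $s=f-\mathbf{h}^T\mathbf{x}$ (the paper calls it $\varepsilon$), reparametrize via $s=\tfrac{d_{\max}}{2}(1+\beta_{m+1})$, and verify the two inclusions by the same algebra. Your remark on why the $d_{\max}/2$ centering is forced is a nice addition that the paper leaves implicit.

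For part~(1), you rightly flag the converse ($d_{\max}\ge 0 \Rightarrow$ nonempty intersection) as delicate, but your proposed argument---that the augmented system is consistent whenever $\mathcal{CG}$ is---is circular: feasibility of the new row for a given $\bm{\beta}$ requires $s\in[0,d_{\max}]$, and $s\ge 0$ is exactly the condition $\mathbf{x}\in\mathcal{H}$ you are trying to establish. The paper's proof has the same gap (it simply asserts ``the construction above shows feasibility''). In fact the ``if and only if'' is false as stated once equality constraints are present: with $n=1$, $\mathbf{c}=0$, $\mathbf{G}=[1\ \ 1]$, a single $\infty$-norm block, and $\mathbf{A}=[1\ \ 1]$, $\mathbf{b}=2$, the sole feasible point is $x=2$; taking $\mathcal{H}=\{x\le 0\}$ gives $d_{\max}=2\ge 0$ while $\mathcal{CG}\cap\mathcal{H}=\emptyset$. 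The issue is that $\sigma=\sum_j|\mathbf{h}^T\mathbf{g}_j|$ ignores $\mathbf{A}\bm{\beta}=\mathbf{b}$, so $d_{\max}$ is only an \emph{upper bound} on the true maximum slack, not its value. The correct statement is one-sided: $d_{\max}<0$ is sufficient (but not necessary) for emptiness, which is all that part~(2) actually needs.
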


\begin{proof}
We establish exactness by proving both set inclusions. Let $\widetilde{\bm{\beta}} = [\bm{\beta}^T \;\; \beta_{m+1}]^T$ denote the augmented coefficient vector for $\mathcal{CG}'$.

\textit{Forward inclusion}:
Let $\mathbf{x} \in \mathcal{CG} \cap \mathcal{H}$. Since $\mathbf{x} \in \mathcal{CG}$, there exists $\bm{\beta} \in \mathbb{R}^m$ such that $\mathbf{x} = \mathbf{c} + \mathbf{G}\bm{\beta}$ and all original constraints on $\bm{\beta}$ hold. Because $\mathbf{x} \in \mathcal{H}$, we have $\mathbf{h}^T\mathbf{x} \leq f$. Define
\begin{equation}
\label{eq:eps_def}
\varepsilon \;=\; f - \mathbf{h}^T\mathbf{x} \;\ge 0 .
\end{equation}
By the triangle inequality and the definition \eqref{eq:dmax}, one obtains
\begin{equation}
\label{eq:eps_bound}
0 \;\le\; \varepsilon \;\le\; d_{\max}.
\end{equation}

If $d_{\max}=0$, then by \eqref{eq:eps_def} we have $\varepsilon=0$ and $\mathbf{h}^T\mathbf{x}=f$; any $\beta_{m+1}\in[-1,1]$ satisfies \eqref{eq:aug_constraint}. If $d_{\max}>0$, set
\begin{equation}
\label{eq:beta_m1_choice}
\beta_{m+1}\;=\; \frac{2\varepsilon}{d_{\max}} - 1 \;\in\; [-1,1].
\end{equation}
Then, substituting \eqref{eq:eps_def} and \eqref{eq:beta_m1_choice} into the last row of \eqref{eq:aug_constraint} (with \eqref{eq:Aprime}--\eqref{eq:bprime}), we obtain
\begin{align}
\label{eq:last_row_eval}
\mathbf{h}^T\mathbf{G}\bm{\beta} + \frac{d_{\max}}{2}\beta_{m+1}
&= \big(\mathbf{h}^T\mathbf{x} - \mathbf{h}^T\mathbf{c}\big) + \frac{d_{\max}}{2}\!\left(\frac{2\varepsilon}{d_{\max}} - 1\right) \nonumber\\
&= \big(f - \varepsilon - \mathbf{h}^T\mathbf{c}\big) + \varepsilon - \frac{d_{\max}}{2} \nonumber\\
&= f - \mathbf{h}^T\mathbf{c} - \frac{d_{\max}}{2},
\end{align}
which is exactly the last component of $\mathbf{b}'$ in \eqref{eq:bprime}. Hence \eqref{eq:aug_constraint} holds; together with \eqref{eq:Gprime} (whose last column is zero), this yields $\mathbf{x}=\mathbf{c}+\mathbf{G}'\widetilde{\bm{\beta}}$, i.e., $\mathbf{x}\in\mathcal{CG}'$.

\textit{Reverse inclusion}:
Let $\mathbf{x} \in \mathcal{CG}'$. Then by \eqref{eq:Gprime} we have $\mathbf{x} = \mathbf{c} + \mathbf{G}\bm{\beta}$ for some $\widetilde{\bm{\beta}}$ satisfying \eqref{eq:aug_constraint}. The original constraints on $\bm{\beta}$ hold by construction of \eqref{eq:Aprime}, hence $\mathbf{x}\in\mathcal{CG}$. For the halfspace, using the last row of \eqref{eq:aug_constraint} and \eqref{eq:bprime} gives
\begin{align}
\label{eq:hx_final}
\mathbf{h}^T\mathbf{x} &= \mathbf{h}^T\mathbf{c} + \mathbf{h}^T\mathbf{G}\bm{\beta} \nonumber\\
&= \mathbf{h}^T\mathbf{c} + \Big(f - \mathbf{h}^T\mathbf{c} - \frac{d_{\max}}{2}\Big) - \frac{d_{\max}}{2}\beta_{m+1} \nonumber\\
&= f - \frac{d_{\max}}{2}\big(1 + \beta_{m+1}\big) \;\le\; f,
\end{align}
since $d_{\max}\ge 0$ and $|\beta_{m+1}|\le 1$. Thus $\mathbf{x}\in\mathcal{H}$.

\textit{Emptiness condition:} 
If $d_{\max}<0$, then from \eqref{eq:dmax} we have $f < \mathbf{h}^T\mathbf{c} - \sum_{j=1}^m |\mathbf{h}^T\mathbf{g}_j|$, which is a valid lower bound on $\min_{\mathbf{x}\in\mathcal{CG}} \mathbf{h}^T\mathbf{x}$ (via the support-function bound for generator representations). Hence $\min_{\mathbf{x}\in\mathcal{CG}} \mathbf{h}^T\mathbf{x} > f$ and the intersection is empty. Conversely, if $d_{\max}\ge 0$, the construction above shows feasibility of \eqref{eq:aug_constraint}, hence non-emptiness.
\end{proof}

\section{Roundabout Constrained Convex Generators }

The set representation roundabout constrained convex generators derives its name from its geometric resemblance to a traffic roundabout, featuring a navigable outer boundary with an inaccessible central region. To establish the mathematical foundation, we begin with an intuitive representation of the roundabout constrained convex generators as a set difference. This formulation provides conceptual clarity and motivates the more computationally tractable representations that follow.

\begin{definition}[Roundabout Constrained Convex Generators]
\label{def:roundabout_ccg}
A Roundabout Constrained Convex Generators (RCG) set $\mathcal{R}$ is defined as the set difference between an outer CCG $\langle \mathbf{c}_o, \mathbf{G}_o, \mathcal{J}_o, \mathcal{P}_o, \mathbf{A}_o, \mathbf{b}_o \rangle$ and an inner CCG $\langle \mathbf{c}_{in}, \mathbf{G}_{in}, \mathcal{J}_{in}, \mathcal{P}_{in}, \mathbf{A}_{in}, \mathbf{b}_{in} \rangle$:
\begin{equation}\label{set_form}
\mathcal{R} = \mathcal{CG}_{\text{outer}}  \setminus \mathcal{CG}_{\text{inner}},
\end{equation}
which admits the following unified representation:


\begin{equation}\label{unified}
\begin{aligned}
\mathcal{R}
= \Bigl\{\,
& \mathbf{c} + \mathbf{G}\boldsymbol{\beta} \ \Big| \
\|\boldsymbol{\beta}_{\mathcal{J}_i}\|_{p_i} \le 1,\
\forall i = 1,\ldots,k_o, \\
& \mathbf{A}\boldsymbol{\beta} = \mathbf{b},\
\nexists\, \boldsymbol{\eta}:
\mathbf{H}\boldsymbol{\eta} = \mathbf{h}(\boldsymbol{\beta}) \text{ with } \\
& \|\boldsymbol{\eta}_{\mathcal{K}_j}\|_{q_j} \le r_j,\
\forall j = 1,\ldots,k_{in},\
\mathbf{A}_{in}\boldsymbol{\eta} = \mathbf{b}_{in}
\Bigr\}, \\
\text{where } &
\mathbf{c} = \mathbf{c}_o,\quad
\mathbf{G} = \mathbf{G}_o, \text{outer CCG center and generators}, \\
& \mathbf{A} = \mathbf{A}_o,\quad
\mathbf{b} = \mathbf{b}_o, \text{outer linear constraints}, \\
& \mathbf{H} = \mathbf{G}_{in},  \text{inner generator matrix}, \\
& \mathbf{h}(\boldsymbol{\beta})
= (\mathbf{c}_o - \mathbf{c}_{in})
+ \mathbf{G}_o \boldsymbol{\beta},  \text{transformation mapping}, \\
& \mathcal{P}_o = \{p_1, p_2, \ldots, p_{k_o}\}
\text{ with } p_i \in [1,\infty], \\
& \text{norms for outer CCG partitions}, \\
& \mathcal{P}_{in} = \{q_1, q_2, \ldots, q_{k_{in}}\}
\text{ with } q_j \in [1,\infty], \\
& \text{norms for inner CCG partitions}, \\
& r_j \in [0,1),\ \forall j = 1,\ldots,k_{in}, \mathbf{r} = [r_1, r_2, \ldots, r_{k_{in}}],\\
& \text{scaling factor for the inner CCG}.
\end{aligned}
\end{equation}

For brevity, we introduce a standardized tuple representation that encodes all parameters of the roundabout constrained convex generators:
\begin{equation}
\mathcal{R} = \langle \mathbf{G}_o, \mathbf{G}_{in}, \mathbf{c}_o, \mathbf{c}_{in}, \mathcal{P}_o, \mathcal{P}_{in}, \mathbf{r}, \mathbf{A}_o, \mathbf{A}_{in}, \mathbf{b}_o, \mathbf{b}_{in} \rangle.
\end{equation}
\end{definition}

\begin{lemma}[Derivation of Unified RCG Representation]
\label{lem:rccg_derivation}
The set difference representation in \eqref{set_form} can be equivalently expressed as the unified representation in \eqref{unified}.
\end{lemma}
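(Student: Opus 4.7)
The plan is to verify the equivalence by a direct parametric substitution, using the parametric CCG form from Definition~\ref{def:ccg} together with the elementary decomposition $\mathbf{x}\in A\setminus B \iff \mathbf{x}\in A \wedge \mathbf{x}\notin B$. I will assemble a chain of if-and-only-if statements so that both set inclusions follow from a single forward pass rather than requiring separate forward and reverse arguments.

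First, I would unpack $\mathbf{x}\in\mathcal{CG}_{\text{outer}}$ via Definition~\ref{def:ccg}: this asserts the existence of some $\boldsymbol{\beta}$ with $\mathbf{x}=\mathbf{c}_o+\mathbf{G}_o\boldsymbol{\beta}$, $\|\boldsymbol{\beta}_{\mathcal{J}_i}\|_{p_i}\le 1$ for all $i\in\{1,\dots,k_o\}$, and $\mathbf{A}_o\boldsymbol{\beta}=\mathbf{b}_o$. Under the identifications $\mathbf{c}=\mathbf{c}_o$, $\mathbf{G}=\mathbf{G}_o$, $\mathbf{A}=\mathbf{A}_o$, $\mathbf{b}=\mathbf{b}_o$ declared in \eqref{unified}, this reproduces exactly the outer-constraint block of the unified form.

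Next, I would translate the exclusion $\mathbf{x}\notin\mathcal{CG}_{\text{inner}}$ into the $\nexists\boldsymbol{\eta}$ clause. Applying Definition~\ref{def:ccg} to the inner set (with partition norm bounds $r_j$ instead of $1$), membership $\mathbf{x}\in\mathcal{CG}_{\text{inner}}$ is equivalent to the existence of $\boldsymbol{\eta}$ satisfying $\mathbf{G}_{in}\boldsymbol{\eta}=\mathbf{x}-\mathbf{c}_{in}$, $\|\boldsymbol{\eta}_{\mathcal{K}_j}\|_{q_j}\le r_j$, and $\mathbf{A}_{in}\boldsymbol{\eta}=\mathbf{b}_{in}$. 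Substituting the outer parametrization yields $\mathbf{x}-\mathbf{c}_{in}=(\mathbf{c}_o-\mathbf{c}_{in})+\mathbf{G}_o\boldsymbol{\beta}$, which, using $\mathbf{H}=\mathbf{G}_{in}$ and $\mathbf{h}(\boldsymbol{\beta})=(\mathbf{c}_o-\mathbf{c}_{in})+\mathbf{G}_o\boldsymbol{\beta}$, is precisely $\mathbf{H}\boldsymbol{\eta}=\mathbf{h}(\boldsymbol{\beta})$ in \eqref{unified}. Negating this existential statement produces the desired non-existence clause, with the same norm and linear constraints.

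The main obstacle, if any, is purely logical bookkeeping rather than analysis: one must verify that the negation of the inner existential is correctly coupled to the outer parametrization through $\mathbf{h}(\boldsymbol{\beta})$, and that the scaling factors $r_j\in[0,1)$ are carried over unchanged rather than silently replaced by the standard CCG bound of $1$. Once these correspondences are tabulated, the forward and reverse inclusions both read off the same chain of equivalences, and the lemma follows without any nontrivial calculation.
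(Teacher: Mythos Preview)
Your proposal is correct and follows essentially the same approach as the paper's proof: both unpack the set difference via $\mathbf{x}\in\mathcal{CG}_{\text{outer}}\wedge\mathbf{x}\notin\mathcal{CG}_{\text{inner}}$, substitute the outer parametrization $\mathbf{x}=\mathbf{c}_o+\mathbf{G}_o\boldsymbol{\beta}$ into the inner membership equation, rearrange to obtain $\mathbf{G}_{in}\boldsymbol{\eta}=(\mathbf{c}_o-\mathbf{c}_{in})+\mathbf{G}_o\boldsymbol{\beta}$, and then read off the identifications $\mathbf{H}=\mathbf{G}_{in}$, $\mathbf{h}(\boldsymbol{\beta})=(\mathbf{c}_o-\mathbf{c}_{in})+\mathbf{G}_o\boldsymbol{\beta}$. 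Your framing as a single chain of equivalences is a minor presentational choice but the underlying argument is identical.
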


\begin{proof}
We begin by explicitly defining the outer and inner CCG sets that constitute the set difference. The outer CCG is given by:
\begin{equation}
\mathcal{CG}_{\text{outer}} = \left\{ \mathbf{c}_o + \mathbf{G}_o \bm{\beta}_o \;\middle|\;
\begin{aligned}
&\|\bm{\beta}_{o,\mathcal{J}_i}\|_{p_i} \leq 1, \forall i = 1,\ldots,k_o, \\
&\mathbf{A}_o \bm{\beta}_o = \mathbf{b}_o
\end{aligned}
\right\},
\end{equation}
and the inner CCG is defined as:
\begin{equation}
\mkern-4mu\mathcal{CG}_{\text{inner}}\mkern-4mu =\mkern-4mu \left\{\mkern-4mu \mathbf{c}_{in}\mkern-4mu +\mkern-4mu \mathbf{G}_{in} \bm{\beta}_{in}\mkern-4mu \;\middle|\;
\begin{aligned}
&\|\bm{\beta}_{in,\mathcal{K}_j}\|_{q_j} \leq r_j, \forall j = 1,\ldots,k_{in},\mkern-4mu \\
&\mathbf{A}_{in} \bm{\beta}_{in} = \mathbf{b}_{in}
\end{aligned}
\right\}\mkern-4mu.
\end{equation}

To derive the unified representation, we characterize the set difference $\mathcal{R} = \mathcal{CG}_{\text{outer}} \setminus \mathcal{CG}_{\text{inner}}$. A point $\mathbf{x} \in \mathcal{R}$ if and only if $\mathbf{x} \in \mathcal{CG}_{\text{outer}}$ and $\mathbf{x} \notin \mathcal{CG}_{\text{inner}}$.

For the membership condition $\mathbf{x} \in \mathcal{CG}_{\text{outer}}$, there exists a parameter vector $\bm{\beta}$ such that:
\begin{align}
\mathbf{x} &= \mathbf{c}_o + \mathbf{G}_o\bm{\beta},\nonumber \\
\|\bm{\beta}_{\mathcal{J}_i}\|_{p_i} &\leq 1,\quad \forall i = 1,\ldots,k_o,\nonumber \\
\mathbf{A}_o\bm{\beta} &= \mathbf{b}_o.
\end{align}

For the exclusion condition $\mathbf{x} \notin \mathcal{CG}_{\text{inner}}$, there does not exist any parameter vector $\bm{\eta}$ that would satisfy:
\begin{align}
\mathbf{x} &= \mathbf{c}_{in} + \mathbf{G}_{in}\bm{\eta}, \nonumber\\
\|\bm{\eta}_{\mathcal{K}_j}\|_{q_j} &\leq r_j,\quad \forall j = 1,\ldots,k_{in},\nonumber \\
\mathbf{A}_{in}\bm{\eta} &= \mathbf{b}_{in}.
\end{align}

Substituting $\mathbf{x} = \mathbf{c}_o + \mathbf{G}_o\bm{\beta}$ into the exclusion condition, we require that no $\bm{\eta}$ exists such that:
\begin{equation}
\mathbf{c}_o + \mathbf{G}_o\bm{\beta} = \mathbf{c}_{in} + \mathbf{G}_{in}\bm{\eta}.
\end{equation}

Rearranging this equation yields:
\begin{equation}
\mathbf{G}_{in}\bm{\eta} = (\mathbf{c}_o - \mathbf{c}_{in}) + \mathbf{G}_o\bm{\beta}.
\end{equation}

By setting $\mathbf{c} = \mathbf{c}_o$, $\mathbf{G} = \mathbf{G}_o$, $\mathbf{A} = \mathbf{A}_o$, $\mathbf{b} = \mathbf{b}_o$, $\mathbf{H} = \mathbf{G}_{in}$, and $\mathbf{h}(\bm{\beta}) = (\mathbf{c}_o - \mathbf{c}_{in}) + \mathbf{G}_o\bm{\beta}$, we obtain the unified representation in \eqref{unified}. 
\end{proof}

\section{Basic Set Operations of RCG}
Having established the RCG representation, we begin with its behavior under linear transformations—the basic mechanism for change of coordinates and dynamics propagation, and a building block for later operations such as Minkowski sum and intersection.

\subsection{Linear Transformation of RCG}

\begin{proposition}[Linear Transformation of RCG]
\label{prop:rccg_linear_transform}
For a matrix $\mathbf{T} \in \mathbb{R}^{m \times n}$ and an RCG $\mathcal{R} = \langle \mathbf{G}_o, \mathbf{G}_{in}, \mathbf{c}_o, \mathbf{c}_{in}, \mathcal{P}_o, \mathcal{P}_{in}, \mathbf{r}, \mathbf{A}_o, \mathbf{A}_{in}, \mathbf{b}_o, \mathbf{b}_{in} \rangle$, the linear transformation yields:
\begin{small}
\begin{equation}
\mathbf{T}(\mathcal{R}) = \langle \mathbf{T}\mathbf{G}_o, \mathbf{T}\mathbf{G}_{in}, \mathbf{T}\mathbf{c}_o, \mathbf{T}\mathbf{c}_{in}, \mathcal{P}_o, \mathcal{P}_{in}, \mathbf{r}, \mathbf{A}_o, \mathbf{A}_{in}, \mathbf{b}_o, \mathbf{b}_{in} \rangle.
\end{equation}
\end{small}

\end{proposition}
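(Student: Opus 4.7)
The plan is to verify the claim by working directly with the unified parametric representation of Definition 3.1 and invoking the CCG linear transformation rule from Proposition 2.2(2) twice---once for the outer generator block and once for the inner exclusion block. Since the partition $\mathcal{J}_o$, the norm family $\mathcal{P}_o$, and the linear constraints $(\mathbf{A}_o,\mathbf{b}_o)$ act only on the coefficient $\bm{\beta}$ and not on the ambient coordinates, they are untouched by premultiplication with $\mathbf{T}$; the affine map satisfies $\mathbf{T}(\mathbf{c}_o+\mathbf{G}_o\bm{\beta})=\mathbf{T}\mathbf{c}_o+(\mathbf{T}\mathbf{G}_o)\bm{\beta}$, so the outer half of the tuple transforms exactly as stated. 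The inner sub-system $\mathbf{H}\bm{\eta}=\mathbf{h}(\bm{\beta})$ transforms analogously into $(\mathbf{T}\mathbf{G}_{in})\bm{\eta}=\mathbf{T}\bigl((\mathbf{c}_o-\mathbf{c}_{in})+\mathbf{G}_o\bm{\beta}\bigr)$, producing the other half.

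With these observations in place, I would then establish both set inclusions at the parameter level. For the forward inclusion, take $\mathbf{y}=\mathbf{T}\mathbf{x}$ with $\mathbf{x}\in\mathcal{R}$, reuse the witnessing $\bm{\beta}$ from the unified form of $\mathcal{R}$ to write $\mathbf{y}=\mathbf{T}\mathbf{c}_o+(\mathbf{T}\mathbf{G}_o)\bm{\beta}$ satisfying the outer constraints verbatim, and argue that any inner witness $\bm{\eta}$ of the transformed system would yield a point $\mathbf{c}_{in}+\mathbf{G}_{in}\bm{\eta}\in\mathcal{CG}_{\text{inner}}$ whose $\mathbf{T}$-image equals $\mathbf{y}$. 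The reverse inclusion is symmetric: starting from $\mathbf{y}$ in the set generated by the transformed tuple with witness $\bm{\beta}$, construct the preimage $\mathbf{x}=\mathbf{c}_o+\mathbf{G}_o\bm{\beta}\in\mathcal{CG}_{\text{outer}}$ and use the exclusion clause to preclude $\mathbf{x}\in\mathcal{CG}_{\text{inner}}$, so that $\mathbf{x}\in\mathcal{R}$ and $\mathbf{y}\in\mathbf{T}(\mathcal{R})$.

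The main obstacle I anticipate is that set difference does not commute with a non-injective linear map: a point of $\mathcal{CG}_{\text{outer}}\setminus\mathcal{CG}_{\text{inner}}$ may share its $\mathbf{T}$-image with a point of $\mathcal{CG}_{\text{inner}}$, so that $\mathbf{T}(\mathcal{CG}_{\text{outer}}\setminus\mathcal{CG}_{\text{inner}})$ and $\mathbf{T}(\mathcal{CG}_{\text{outer}})\setminus\mathbf{T}(\mathcal{CG}_{\text{inner}})$ need not coincide when $\ker(\mathbf{T})\neq\{\mathbf{0}\}$. I would therefore open the proof by either restricting to the case where $\mathbf{T}$ is left-invertible (as is standard for coordinate changes and observer maps in reachability) or by making explicit that $\mathbf{T}(\mathcal{R})$ is characterized through its parametric form, so that the exclusion is enforced on the coefficient $\bm{\beta}$ rather than on the image point in ambient space. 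Once this interpretive choice is fixed, the parameter-level verification sketched above closes both inclusions without further computation.
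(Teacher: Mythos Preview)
Your approach matches the paper's: both transform the outer and inner CCGs separately via Proposition~2.2(2), noting that the parameter-space constraints $(\mathcal{P}_o,\mathcal{P}_{in},\mathbf{r},\mathbf{A}_o,\mathbf{A}_{in},\mathbf{b}_o,\mathbf{b}_{in})$ are untouched. The paper's proof is in fact shorter and less careful than yours---it simply asserts $\mathbf{T}(\mathcal{R})=\mathbf{T}(\mathcal{R}_{\text{outer}})\setminus\mathbf{T}(\mathcal{R}_{\text{inner}})$ and applies the CCG transformation rule to each piece, without any two-inclusion argument and without addressing injectivity.

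The obstacle you flag is real and is precisely the point the paper glosses over: for non-injective $\mathbf{T}$ the identity $\mathbf{T}(A\setminus B)=\mathbf{T}(A)\setminus\mathbf{T}(B)$ can fail, since a point of $A\setminus B$ may share its image with a point of $B$. Your two proposed remedies---restrict to injective $\mathbf{T}$, or interpret the transformed RCG through its parametric tuple so that exclusion is enforced on $\bm{\beta}$ rather than on the ambient image---are exactly the right options. The paper implicitly adopts the second interpretation (the result is stated as a tuple, not as a set equality in $\mathbb{R}^m$), but never says so; your version makes the issue explicit and is the more rigorous of the two.
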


\begin{proof}
Since $\mathcal{R} = \mathcal{R}_{\text{outer}} \setminus \mathcal{R}_{\text{inner}}$, applying the linear transformation:
\begin{equation}
\mathbf{T}(\mathcal{R}) = \mathbf{T}(\mathcal{R}_{\text{outer}}) \setminus \mathbf{T}(\mathcal{R}_{\text{inner}}).
\end{equation}

The transformed outer and inner CCGs become:
\begin{align}
\mathbf{T}(\mathcal{R}_{\text{outer}}) &= \{ \mathbf{T}\mathbf{c}_o + \mathbf{T}\mathbf{G}_o\bm{\beta} \mid \text{constraints on } \bm{\beta} \}, \\
\mathbf{T}(\mathcal{R}_{\text{inner}}) &= \{ \mathbf{T}\mathbf{c}_{in} + \mathbf{T}\mathbf{G}_{in}\bm{\eta} \mid \text{constraints on } \bm{\eta} \}.
\end{align}

Parameter constraints remain unchanged as they are defined in parameter space. The resulting structure maintains the RCG form with transformed centers and generators.
\end{proof}

\subsection{Minkowski Sum}

\begin{proposition}[Minkowski Sum of RCG and CCG]
\label{prop:rccg_minkowski_sum}
For an RCG $\mathcal{R} = \langle \mathbf{G}_o, \mathbf{G}_{in}, \mathbf{c}_o, \mathbf{c}_{in}, \mathcal{P}_o, \mathcal{P}_{in}, \mathbf{r}, \mathbf{A}_o, \mathbf{A}_{in}, \mathbf{b}_o, \mathbf{b}_{in} \rangle$ and a CCG $\mathcal{CG} = \langle \mathbf{c}_{\text{ccg}}, \mathbf{G}_{\text{ccg}}, \mathcal{J}_{\text{ccg}}, \mathcal{P}_{\text{ccg}}, \mathbf{A}_{\text{ccg}}, \mathbf{b}_{\text{ccg}} \rangle$, their Minkowski sum yields:
\begin{equation}
\begin{aligned}
\mathcal{R} \oplus \mathcal{CG} = \Bigl\{ &(\mathbf{c}_o + \mathbf{c}_{\text{ccg}}) + [\mathbf{G}_o, \mathbf{G}_{\text{ccg}}]\tilde{\bm{\beta}} \;\Big|\; \\
&\|\tilde{\bm{\beta}}_{\mathcal{J}_i}\|_{p_i} \leq 1, \; i = 1,\ldots,k_o, \\
&\|\tilde{\bm{\beta}}_{\mathcal{L}_j}\|_{p_j} \leq 1, \; j = 1,\ldots,m_{\text{ccg}}, \\
&\begin{bmatrix} \mathbf{A}_o & \mathbf{0} \\ \mathbf{0} & \mathbf{A}_{\text{ccg}} \end{bmatrix}\tilde{\bm{\beta}} = \begin{bmatrix} \mathbf{b}_o \\ \mathbf{b}_{\text{ccg}} \end{bmatrix}, \\
&\nexists \bm{\eta}: \mathbf{G}_{in}\bm{\eta} = (\mathbf{c}_o - \mathbf{c}_{in}) + \mathbf{G}_o\bm{\beta} \\
&\quad\quad + \mathbf{c}_{\text{ccg}} + \mathbf{G}_{\text{ccg}}\bm{\gamma} \\
&\|\bm{\eta}_{\mathcal{K}_l}\|_{q_l} \leq r_l, \; l = 1,\ldots,k_{in}, \; \mathbf{A}_{in}\bm{\eta} = \mathbf{b}_{in}
\Bigr\},
\end{aligned}
\end{equation}
where $\tilde{\bm{\beta}} = [\bm{\beta}^T, \bm{\gamma}^T]^T$.
\end{proposition}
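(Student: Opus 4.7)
The plan is to unfold the Minkowski sum set-theoretically and reassemble the result by piecing together the exact CCG-Minkowski rule from Proposition~2.1 with the RCG exclusion clause of Lemma~3.2. Writing $\mathbf{z}\in\mathcal{R}\oplus\mathcal{CG}$ as $\mathbf{z} = \mathbf{x}+\mathbf{y}$ with $\mathbf{x}\in\mathcal{R}$ and $\mathbf{y}\in\mathcal{CG}$, I would parameterize $\mathbf{x} = \mathbf{c}_o + \mathbf{G}_o\bm{\beta}$ subject to the outer partition-norm and linear constraints plus the inner exclusion in $\bm{\eta}$, and $\mathbf{y} = \mathbf{c}_{\text{ccg}} + \mathbf{G}_{\text{ccg}}\bm{\gamma}$ subject to the CCG constraints on $\bm{\gamma}$. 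Both set inclusions would then follow constructively from this parameterization.

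The outer-plus-additive aggregation reduces to a direct application of Proposition~2.1(1). With $\tilde{\bm{\beta}} = [\bm{\beta}^T,\bm{\gamma}^T]^T$, one obtains $\mathbf{z} = (\mathbf{c}_o+\mathbf{c}_{\text{ccg}}) + [\mathbf{G}_o,\mathbf{G}_{\text{ccg}}]\tilde{\bm{\beta}}$. Because the two partitions act on disjoint blocks of $\tilde{\bm{\beta}}$, the partition-norm constraints concatenate verbatim (with the usual index shift), and the linear equalities stack into the block-diagonal form $\mathrm{diag}(\mathbf{A}_o,\mathbf{A}_{\text{ccg}})\tilde{\bm{\beta}} = [\mathbf{b}_o^T,\mathbf{b}_{\text{ccg}}^T]^T$. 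Together with the unchanged inner data $(\mathbf{G}_{in},\mathbf{c}_{in},\mathcal{P}_{in},\mathbf{r},\mathbf{A}_{in},\mathbf{b}_{in})$, the result repackages directly into the RCG tuple of Definition~3.1. This portion of the argument is routine and equivalence-preserving in both directions.

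The main obstacle is transporting the exclusion clause through the sum. The condition inherited from $\mathbf{x}\notin\mathcal{CG}_{\text{inner}}$ is a constraint on $\bm{\beta}$ alone---no $\bm{\eta}$ exists with $\mathbf{G}_{in}\bm{\eta}=(\mathbf{c}_o-\mathbf{c}_{in})+\mathbf{G}_o\bm{\beta}$ and the inner partition/equality constraints. The displayed statement, however, adds $\mathbf{c}_{\text{ccg}}+\mathbf{G}_{\text{ccg}}\bm{\gamma}$ on the right-hand side, effectively rewriting the clause as $\mathbf{z}\notin\mathcal{CG}_{\text{inner}}$. Since $(A\setminus B)\oplus C$ and $(A\oplus C)\setminus B$ are generally incomparable---one-dimensional examples exhibit points in one but not the other in either direction---the two exclusion formulations are not equivalent a priori. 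I would therefore either (i) derive the exact Minkowski sum with the $\bm{\beta}$-only exclusion and amend the proposition accordingly, or (ii) impose a geometric separation hypothesis (e.g., $\mathcal{CG}_{\text{inner}} \oplus (-\mathcal{CG}) \subseteq \mathcal{CG}_{\text{outer}}$ with a margin at the hole boundary) under which the two clauses coincide, and prove equality of the two sets under that assumption; resolving this is the pivotal step of the proof.
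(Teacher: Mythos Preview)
Your direct-unfolding approach is sounder than the paper's. The paper's proof does not parameterize $\mathbf{x}$ and $\mathbf{y}$ separately as you do; instead it invokes a ``distributive property of Minkowski sum over set difference,'' asserting $(\mathcal{CG}_{\text{outer}}\setminus\mathcal{CG}_{\text{inner}})\oplus\mathcal{CG}=(\mathcal{CG}_{\text{outer}}\oplus\mathcal{CG})\setminus(\mathcal{CG}_{\text{inner}}\oplus\mathcal{CG})$, and then sketches the two Minkowski sums. That identity is false in general (take $A=[-2,2]$, $B=[-1,1]$, $C=[-1,1]$ in $\mathbb{R}$: the left side is $[-3,3]\setminus\{0\}$, the right side is $[-3,-2)\cup(2,3]$), so the paper's argument has precisely the gap you flag. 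Moreover, even granting that identity, the inner set it produces would be $\mathcal{CG}_{\text{inner}}\oplus\mathcal{CG}$, not $\mathcal{CG}_{\text{inner}}$; yet, as you correctly read off, the displayed exclusion clause is equivalent to $\mathbf{z}\notin\mathcal{CG}_{\text{inner}}$, i.e.\ the proposition actually asserts $(\mathcal{CG}_{\text{outer}}\oplus\mathcal{CG})\setminus\mathcal{CG}_{\text{inner}}$. So the paper's proof matches neither the true Minkowski sum nor its own stated formula.

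Your diagnosis is therefore on target: the exact $\mathcal{R}\oplus\mathcal{CG}$ carries the $\bm{\beta}$-only exclusion (your option~(i)), and equality with the stated form requires an additional hypothesis (your option~(ii)). The paper does not supply such a hypothesis and its one-line justification does not close the gap. Your plan---prove the $\bm{\beta}$-only version exactly, then either amend the statement or add a separation assumption---is the right way to repair the result.
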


\begin{proof}
Using the distributive property of Minkowski sum over set difference:
\begin{equation}
\mathcal{R} \oplus \mathcal{CG} = (\mathcal{R}_{\text{outer}} \oplus \mathcal{CG}) \setminus (\mathcal{R}_{\text{inner}} \oplus \mathcal{CG}).
\end{equation}

The outer Minkowski sum:
\begin{equation}
\begin{aligned}
\mathcal{R}_{\text{outer}} \oplus \mathcal{CG} = \{ (\mathbf{c}_o + \mathbf{c}_{\text{ccg}}) + [\mathbf{G}_o, \mathbf{G}_{\text{ccg}}]\tilde{\bm{\beta}} \mid\\ \text{augmented constraints} \}.
\end{aligned}
\end{equation}

The inner Minkowski sum:
\begin{equation}
\begin{aligned}
\mathcal{R}_{\text{inner}} \oplus \mathcal{CG} = \{ (\mathbf{c}_{in} + \mathbf{c}_{\text{ccg}}) + [\mathbf{G}_{in}, \mathbf{G}_{\text{ccg}}]\tilde{\bm{\eta}}\mid \\ \text{augmented constraints} \}.
\end{aligned}
\end{equation}

The exclusion condition requires that points in the outer sum cannot be expressed as elements of the inner sum. Since the CCG component is determined by $\tilde{\bm{\beta}}$, this yields the non-existence constraint in the proposition. The result maintains the RCG structure with augmented generators.
\end{proof}

\subsection{Intersection with Constrained Convex Generators}

The intersection operation between RCG sets and standard CCG sets represents a fundamental operation for compositional analysis. We establish that this operation preserves the RCG structure, demonstrating closure under intersection. 

\begin{proposition}[Intersection of RCG and CCG]
\label{prop:rccg_ccg_intersection}
Let $\mathcal{R}=\langle \mathbf{G}_o,\mathbf{G}_{\mathrm{in}},\mathbf{c}_o,\mathbf{c}_{\mathrm{in}}, \mathcal{P}_o,\mathcal{P}_{\mathrm{in}},\mathbf{r},\mathbf{A}_o,\mathbf{A}_{\mathrm{in}},\mathbf{b}_o,\mathbf{b}_{\mathrm{in}}\rangle$ and $\mathcal{CG}=\langle \mathbf{c}_{\mathrm{ccg}},\mathbf{G}_{\mathrm{ccg}}, \mathcal{J}_{\mathrm{ccg}},\mathcal{P}_{\mathrm{ccg}},\mathbf{A}_{\mathrm{ccg}},\mathbf{b}_{\mathrm{ccg}}\rangle$.
Then their intersection $\mathcal{S}=\mathcal{R}\cap\mathcal{CG}$ admits the unified RCG form
\begin{align}
\mathcal{S}
=\Bigl\{&
\mathbf{c}_o+\tilde{\mathbf{G}}\tilde{\bm{\beta}}
\;\Big|\;
\|\tilde{\bm{\beta}}_{\mathcal{J}_i}\|_{p_i}\le 1,\; i=1,\ldots,k_o,\nonumber \\
&\|\tilde{\bm{\beta}}_{\mathcal{L}_j}\|_{p_j}\le 1,\; j=1,\ldots,m_{\mathrm{ccg}},
\notag\\[-1mm]
&\tilde{\mathbf{A}}\tilde{\bm{\beta}}=\tilde{\mathbf{b}},\;\;
\nexists\,\bm{\eta}:\;
\mathbf{G}_{\mathrm{in}}\bm{\eta}
=(\mathbf{c}_o-\mathbf{c}_{\mathrm{in}})+\mathbf{G}_o\,\Pi_\beta\tilde{\bm{\beta}},
\notag\\
&\|\bm{\eta}_{\mathcal{K}_\ell}\|_{q_\ell}\le r_l,\;\ell=1,\ldots,k_{\mathrm{in}},\;\;
\mathbf{A}_{\mathrm{in}}\bm{\eta}=\mathbf{b}_{\mathrm{in}}
\Bigr\},
\label{eq:rcg_ccg_intersection_unified}
\end{align}
where
\begin{align}
&\tilde{\bm{\beta}}=\begin{bmatrix}\bm{\beta}\\ \bm{\gamma}\end{bmatrix},
\quad
\Pi_\beta=[\,I_p\ \ 0\,],
\quad
\tilde{\mathbf{G}}=[\,\mathbf{G}_o\ \ \mathbf{0}\,],
\notag\\
&\tilde{\mathbf{A}}=
\begin{bmatrix}
\mathbf{A}_o & \mathbf{0}\\
\mathbf{0}   & \mathbf{A}_{\mathrm{ccg}}\\
\mathbf{G}_o & -\mathbf{G}_{\mathrm{ccg}}
\end{bmatrix},
\quad
\tilde{\mathbf{b}}=
\begin{bmatrix}
\mathbf{b}_o\\
\mathbf{b}_{\mathrm{ccg}}\\
\mathbf{c}_{\mathrm{ccg}}-\mathbf{c}_o
\end{bmatrix}.
\end{align}
The index families $\{\mathcal{J}_i\}$ act on the $\bm{\beta}$-block, and $\{\mathcal{L}_j\}$ are the shifted CCG groups acting on the $\bm{\gamma}$-block.
\end{proposition}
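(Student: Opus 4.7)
The plan is to reduce the claim to two ingredients: the set identity $(A\setminus B)\cap C=(A\cap C)\setminus B$, and the standard closed-form intersection of two CCGs (see~\cite{silverccg2022,silvestre2023exact}). Starting from $\mathcal{R}=\mathcal{CG}_{\mathrm{outer}}\setminus\mathcal{CG}_{\mathrm{inner}}$ as in Definition~\ref{def:roundabout_ccg}, the identity yields $\mathcal{S}=(\mathcal{CG}_{\mathrm{outer}}\cap\mathcal{CG})\setminus\mathcal{CG}_{\mathrm{inner}}$, which turns the proposition into two tasks: (i) assemble $\mathcal{CG}_{\mathrm{outer}}\cap\mathcal{CG}$ as a single CCG in exactly the form prescribed by $(\tilde{\mathbf{G}},\tilde{\mathbf{A}},\tilde{\mathbf{b}})$, and (ii) rewrite the inner exclusion in terms of the enlarged coordinate $\tilde{\bm{\beta}}$.

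For step (i), I would introduce the augmented coefficient $\tilde{\bm{\beta}}=[\bm{\beta}^T,\bm{\gamma}^T]^T$, pick $\tilde{\mathbf{G}}=[\mathbf{G}_o,\mathbf{0}]$ so that the center remains $\mathbf{c}_o$, and stack the norm constraints on $\{\mathcal{J}_i\}$ and the shifted $\{\mathcal{L}_j\}$ over the two blocks of $\tilde{\bm{\beta}}$. The appended row $\mathbf{G}_o\bm{\beta}-\mathbf{G}_{\mathrm{ccg}}\bm{\gamma}=\mathbf{c}_{\mathrm{ccg}}-\mathbf{c}_o$ forces $\mathbf{c}_o+\mathbf{G}_o\bm{\beta}=\mathbf{c}_{\mathrm{ccg}}+\mathbf{G}_{\mathrm{ccg}}\bm{\gamma}$, which is exactly the point-equality required for the intersection; together with the block-diagonal rows for $\mathbf{A}_o$ and $\mathbf{A}_{\mathrm{ccg}}$, this produces $\tilde{\mathbf{A}},\tilde{\mathbf{b}}$ as stated. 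Verifying the resulting object is a CCG is routine index bookkeeping.

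For step (ii), I would translate the exclusion condition through the projection $\Pi_\beta=[I_p,\,0]$, which satisfies $\bm{\beta}=\Pi_\beta\tilde{\bm{\beta}}$. Every point $\mathbf{x}\in\mathcal{CG}_{\mathrm{outer}}\cap\mathcal{CG}$ admits the representation $\mathbf{x}=\mathbf{c}_o+\mathbf{G}_o\Pi_\beta\tilde{\bm{\beta}}$, so by Lemma~\ref{lem:rccg_derivation} the condition $\mathbf{x}\notin\mathcal{CG}_{\mathrm{inner}}$ is equivalent to the non-existence of $\bm{\eta}$ with $\mathbf{G}_{\mathrm{in}}\bm{\eta}=(\mathbf{c}_o-\mathbf{c}_{\mathrm{in}})+\mathbf{G}_o\Pi_\beta\tilde{\bm{\beta}}$, $\|\bm{\eta}_{\mathcal{K}_\ell}\|_{q_\ell}\le r_\ell$, and $\mathbf{A}_{\mathrm{in}}\bm{\eta}=\mathbf{b}_{\mathrm{in}}$. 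This is precisely the exclusion block of~\eqref{eq:rcg_ccg_intersection_unified}. Combining (i) and (ii) gives both inclusions $\mathcal{S}\subseteq\mathcal{R}\cap\mathcal{CG}$ and $\mathcal{R}\cap\mathcal{CG}\subseteq\mathcal{S}$ as direct consequences of the equivalences established at each stage.

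The only subtle point I foresee is the choice to express the outer point as $\mathbf{c}_o+\mathbf{G}_o\Pi_\beta\tilde{\bm{\beta}}$ rather than symmetrically through the $\bm{\gamma}$-block; the two forms are equivalent on the feasible set precisely because the appended equality row of $\tilde{\mathbf{A}}$ forces them to agree, so the projection-based exclusion in the proposition is well-defined and consistent. Everything beyond that reduces to relabeling partitions and concatenating norm families, which the CCG algebra already handles cleanly, so no deeper argument is needed.
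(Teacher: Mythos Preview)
Your proposal is correct and follows essentially the same route as the paper's proof. The only cosmetic difference is that you invoke the set identity $(A\setminus B)\cap C=(A\cap C)\setminus B$ explicitly and then verify the CCG intersection and the exclusion rewrite separately, whereas the paper runs a direct double-inclusion argument without naming the identity; the underlying construction (augmenting to $\tilde{\bm{\beta}}$, appending the coupling row $\mathbf{G}_o\bm{\beta}-\mathbf{G}_{\mathrm{ccg}}\bm{\gamma}=\mathbf{c}_{\mathrm{ccg}}-\mathbf{c}_o$, and passing the exclusion through $\Pi_\beta$) is identical in both.
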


\begin{proof}
 Suppose $\mathbf{x}\in\mathcal{R}\cap\mathcal{CG}$. Then there exist $\bm{\beta}$ and $\bm{\gamma}$ such that $\mathbf{x} = \mathbf{c}_o+\mathbf{G}_o\bm{\beta} = \mathbf{c}_{\mathrm{ccg}}+\mathbf{G}_{\mathrm{ccg}}\bm{\gamma}$, with $\|\bm{\beta}_{\mathcal{J}_i}\|_{p_i}\le 1$, $\mathbf{A}_o\bm{\beta}=\mathbf{b}_o$, and $\|\bm{\gamma}_{\mathcal{J}_{\mathrm{ccg},j}}\|_{p_j}\le 1$, $\mathbf{A}_{\mathrm{ccg}}\bm{\gamma}=\mathbf{b}_{\mathrm{ccg}}$. Equating the representations yields the coupling constraint $\mathbf{G}_o\bm{\beta}-\mathbf{G}_{\mathrm{ccg}}\bm{\gamma} = \mathbf{c}_{\mathrm{ccg}}-\mathbf{c}_o$.
By defining the augmented vector $\tilde{\bm{\beta}}=[\bm{\beta}^T, \bm{\gamma}^T]^T$, all linear constraints can be expressed compactly as $\tilde{\mathbf{A}}\tilde{\bm{\beta}}=\tilde{\mathbf{b}}$. The representation of $\mathbf{x}$ becomes $\mathbf{c}_o+\tilde{\mathbf{G}}\tilde{\bm{\beta}}$. Furthermore, since $\mathbf{x}\in\mathcal{R}$, it must satisfy the inner exclusion condition, i.e., there is no $\bm{\eta}$ such that $\mathbf{G}_{\mathrm{in}}\bm{\eta} = (\mathbf{c}_o-\mathbf{c}_{\mathrm{in}})+\mathbf{G}_o\bm{\beta}$ with $\|\bm{\eta}_{\mathcal{K}_\ell}\|_{q_\ell}\le r_l$ and $\mathbf{A}_{\mathrm{in}}\bm{\eta}=\mathbf{b}_{\mathrm{in}}$. Substituting $\bm{\beta} = \Pi_\beta\tilde{\bm{\beta}}$ yields the form in \eqref{eq:rcg_ccg_intersection_unified}.

 Conversely, let $\mathbf{x}$ be a point with the representation \eqref{eq:rcg_ccg_intersection_unified}. The block equation $\tilde{\mathbf{A}}\tilde{\bm{\beta}}=\tilde{\mathbf{b}}$ implies $\mathbf{A}_o\bm{\beta}=\mathbf{b}_o$, $\mathbf{A}_{\mathrm{ccg}}\bm{\gamma}=\mathbf{b}_{\mathrm{ccg}}$, and $\mathbf{G}_o\bm{\beta}-\mathbf{G}_{\mathrm{ccg}}\bm{\gamma} = \mathbf{c}_{\mathrm{ccg}}-\mathbf{c}_o$. The last equality ensures that $\mathbf{x} = \mathbf{c}_o+\mathbf{G}_o\bm{\beta} = \mathbf{c}_{\mathrm{ccg}}+\mathbf{G}_{\mathrm{ccg}}\bm{\gamma}$. The norm and linear constraints on $\bm{\beta}$ and $\bm{\gamma}$ establish that $\mathbf{x}$ belongs to the outer set of $\mathcal{R}$ and also to $\mathcal{CG}$. The non-existence condition on $\bm{\eta}$ ensures that $\mathbf{x}$ is not in the inner set of $\mathcal{R}$. Thus, $\mathbf{x}\in\mathcal{R}\cap\mathcal{CG}$.
\end{proof}

\begin{remark}[Minkowski Sum and Intersection of RCG with RCG]
\label{rem:rcg_rcg_intersection}
The Minkowski sum and intersection of two RCG sets presents a more complex scenario that is not addressed in Propositions~\ref{prop:rccg_minkowski_sum} and~\ref{prop:rccg_ccg_intersection}. Unlike the RCG-CCG Minkowski sum and intersection, the RCG-RCG Minkowski sum and intersection do not maintain closure within the RCG class. Specifically, the Minkowski sum and intersection of two RCG sets, each containing a single topological hole, may yield a set with multiple disjoint or joint holes. This occurs because the exclusion regions from different RCGs can create separate voids in the resulting feasible region. Formally representing such multiply-connected domains requires extending beyond the single-hole RCG framework presented in this work. Future research will address this challenge through hybrid RCG formulations, which will systematically handle sets with multiple topological holes.
\end{remark}

The resulting representation in \ref{prop:rccg_ccg_intersection} matches the standard RCG unified form with augmented generators, centers, and constraints. Therefore, $\mathcal{S} = \mathcal{R} \cap \mathcal{CG}$ is itself an RCG, confirming closure under intersection operations. This closure property enables compositional analysis techniques where complex feasible regions are constructed through successive intersections while maintaining computational tractability within the RCG framework.

\section{Special Cases Based on Norm Selection}

The general RCG framework encompasses several important special cases that arise from specific choices of norms in the parameter constraints. These special cases offer computational advantages and geometric interpretations that make them particularly suitable for different application domains. We formally define three primary specializations that frequently appear in control and verification problems.

\begin{definition}[Roundabout Ellipsotopes]
\label{def:roundabout_ellipsotopes}
A \emph{Roundabout Ellipsotope} $\mathcal{RE}$ is an RCG where all norm constraints employ the Euclidean norm ($p_i = q_j = 2$ for all $i,j$). The set takes the form:
\begin{equation}
\mathcal{RE} = \mathcal{E}_{\text{outer}} \setminus \mathcal{E}_{\text{inner}},
\end{equation}
where both $\mathcal{E}_{\text{outer}}$ and $\mathcal{E}_{\text{inner}}$ are ellipsotopes~\cite{kousik2022ellipsotopes} with $L_2$-norm constraints on their respective parameter vectors. The tuple representation simplifies to:
\begin{equation}
\mathcal{RE} = \langle \mathbf{G}_o, \mathbf{G}_{in}, \mathbf{c}_o, \mathbf{c}_{in}, \mathcal{J}_o, \mathcal{J}_{in}, \mathbf{r}, \mathbf{A}_o, \mathbf{A}_{in}, \mathbf{b}_o, \mathbf{b}_{in} \rangle_{2}.
\end{equation}
where the norm specifications are implicitly $p_i = q_j = 2$ throughout.
\end{definition}

\begin{definition}[Roundabout Constrained Zonotopes]
\label{def:roundabout_constrained_zonotopes}
A \emph{Roundabout Constrained Zonotope} $\mathcal{RCZ}$ is an RCG where all norm constraints employ the infinity norm ($p_i = q_j = \infty$ for all $i,j$). The set is characterized by:
\begin{equation}
\mathcal{RCZ} = \mathcal{Z}_{\text{outer}}^c \setminus \mathcal{Z}_{\text{inner}}^c,
\end{equation}
where $\mathcal{Z}_{\text{outer}}^c$ and $\mathcal{Z}_{\text{inner}}^c$ are constrained zonotopes with box constraints $\|\bm{\beta}\|_\infty \leq 1$ and $\|\bm{\eta}\|_\infty \leq r_j$ respectively, along with linear equality constraints. The tuple representation becomes:
\begin{equation}
\mathcal{RCZ} = \langle \mathbf{G}_o, \mathbf{G}_{in}, \mathbf{c}_o, \mathbf{c}_{in}, \mathbf{r}, \mathbf{A}_o, \mathbf{A}_{in}, \mathbf{b}_o, \mathbf{b}_{in} \rangle_{\infty}.
\end{equation}
with the understanding that all partitions use infinity norm constraints.
\end{definition}

\begin{definition}[Roundabout Zonotopes]
\label{def:roundabout_zonotopes}
A \emph{Roundabout Zonotope} $\mathcal{RZ}$ is a special case of $\mathcal{RCZ}$ where no additional linear equality constraints are imposed ($\mathbf{A}_o = \mathbf{A}_{in} = \emptyset$). The set reduces to:
\begin{equation}
    \mathcal{RZ} \;=\; \mathcal{Z}_{\text{outer}} \setminus \mathcal{Z}_{\text{inner}},
\end{equation}
where
\begin{align}
    \mathcal{Z}_{\text{outer}} 
        &= \big\{ \mathbf{c}_o + \mathbf{G}_o \bm{\beta} \;\big|\; \|\bm{\beta}\|_\infty \le 1 \big\},\\
    \mathcal{Z}_{\text{inner}} 
        &= \big\{ \mathbf{c}_{in} + \tilde{\mathbf{G}}_{in} \bm{\eta} \;\big|\; \|\bm{\eta}\|_\infty \le 1 \big\},
\end{align}
and the inner generator matrix absorbs the scale factor, 
\begin{equation}
    \tilde{\mathbf{G}}_{in} \;\coloneqq\; \mathbf{r}^T\,\mathbf{G}_{in},\qquad\forall r \in \mathbf{r}, \qquad 0 \le r < 1.
\end{equation}
With this normalization, the tuple representation becomes
\begin{equation}
    \mathcal{RZ} \;=\; \langle \mathbf{G}_o, \tilde{\mathbf{G}}_{in}, \mathbf{c}_o, \mathbf{c}_{in},\mathbf{r} \rangle_{\infty}.
\end{equation}
\end{definition}

When the generator matrices of both CCGs coincide, the roundabout constrained convex generators admits a simplified representation.

\begin{corollary}[Common Generator RCG]
\label{cor:common_generator}
When $\mathbf{G}_o = \mathbf{G}_{in} = \mathbf{G}$, the roundabout constrained convex generators becomes:
\begin{equation}
\begin{aligned}
\mathcal{R} = \Bigl\{ &\mathbf{c}_o + \mathbf{G}\bm{\beta} \;\Big|\; \bm{\beta} \in \mathcal{C}_o(\mathcal{P}_o, \mathbf{A}_o, \mathbf{b}_o), \\
&\nexists \bm{\eta} \in \mathcal{C}_{in}(\mathcal{P}_{in}, \mathbf{A}_{in}, \mathbf{b}_{in}): \mathbf{G}\bm{\eta} = (\mathbf{c}_o - \mathbf{c}_{in}) + \mathbf{G}\bm{\beta}
\Bigr\},
\end{aligned}
\end{equation}
where $\mathcal{C}_o$ and $\mathcal{C}_{in}$ denote the constraint sets for the outer and inner parameters respectively. The shared generator matrix $\mathbf{G}$ simplifies the exclusion condition while maintaining the center offset.
\end{corollary}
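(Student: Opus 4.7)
The plan is to derive the claim by directly specializing the unified representation \eqref{unified} of Definition~\ref{def:roundabout_ccg} under the structural hypothesis $\mathbf{G}_o = \mathbf{G}_{in} = \mathbf{G}$. The equivalence between the set-difference form \eqref{set_form} and the unified form is already established by Lemma~\ref{lem:rccg_derivation}, so the argument reduces to substitution and relabeling; no new set-inclusion proof is needed.

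First, I would match the ingredients of \eqref{unified} to the common-generator setting: the point parameterization becomes $\mathbf{x} = \mathbf{c}_o + \mathbf{G}\bm{\beta}$; the outer feasibility conditions $\|\bm{\beta}_{\mathcal{J}_i}\|_{p_i} \le 1$ together with $\mathbf{A}_o\bm{\beta} = \mathbf{b}_o$ are bundled into the shorthand $\bm{\beta} \in \mathcal{C}_o(\mathcal{P}_o,\mathbf{A}_o,\mathbf{b}_o)$; and the inner feasibility conditions $\|\bm{\eta}_{\mathcal{K}_j}\|_{q_j} \le r_j$ together with $\mathbf{A}_{in}\bm{\eta} = \mathbf{b}_{in}$ (which absorb the scale vector $\mathbf{r}$) are bundled into $\bm{\eta} \in \mathcal{C}_{in}(\mathcal{P}_{in},\mathbf{A}_{in},\mathbf{b}_{in})$. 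Under $\mathbf{G}_o = \mathbf{G}_{in} = \mathbf{G}$, the inner generator matrix $\mathbf{H}$ collapses to $\mathbf{G}$ and the transformation mapping becomes $\mathbf{h}(\bm{\beta}) = (\mathbf{c}_o - \mathbf{c}_{in}) + \mathbf{G}\bm{\beta}$, so the non-existence clause $\mathbf{H}\bm{\eta} = \mathbf{h}(\bm{\beta})$ rewrites exactly as $\mathbf{G}\bm{\eta} = (\mathbf{c}_o - \mathbf{c}_{in}) + \mathbf{G}\bm{\beta}$, which is the exclusion constraint stated in the corollary.

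There is no substantive obstacle; the only item worth recording is the geometric interpretation of the rewritten exclusion. A candidate point $\mathbf{c}_o + \mathbf{G}\bm{\beta}$ lies in the inner CCG precisely when there exists $\bm{\eta} \in \mathcal{C}_{in}$ with $\mathbf{c}_{in} + \mathbf{G}\bm{\eta} = \mathbf{c}_o + \mathbf{G}\bm{\beta}$, which rearranges to the displayed condition; excluding such an $\bm{\eta}$ is therefore equivalent to removing the point from $\mathcal{CG}_{\text{inner}}$, matching the set-difference definition \eqref{set_form}. Reading the specialized \eqref{unified} against the corollary's display then yields both inclusions term by term, proving the simplified representation.
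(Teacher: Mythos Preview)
Your proposal is correct and matches the paper's approach: the paper states this corollary without an explicit proof, treating it as an immediate specialization of the unified representation \eqref{unified} (whose equivalence with the set-difference form is Lemma~\ref{lem:rccg_derivation}), which is precisely the substitution-and-relabel argument you give.
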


When the centers coincide, the transformation mapping eliminates the translation term.

\begin{corollary}[Concentric RCG]
\label{cor:concentric}
For concentric CCGs with $\mathbf{c}_o = \mathbf{c}_{in} = \mathbf{c}$, the unified representation reduces to:
\begin{equation}
\begin{aligned}
\mathcal{R} = \Bigl\{ &\mathbf{c} + \mathbf{G}_o\bm{\beta} \;\Big|\; \bm{\beta} \in \mathcal{C}_o(\mathcal{P}_o, \mathbf{A}_o, \mathbf{b}_o), \\
&\nexists \bm{\eta} \in \mathcal{C}_{in}(\mathcal{P}_{in}, \mathbf{A}_{in}, \mathbf{b}_{in}): \mathbf{G}_{in}\bm{\eta} = \mathbf{G}_o\bm{\beta}
\Bigr\}.
\end{aligned}
\end{equation}
The absence of the center offset term yields a purely generator-based mapping between the parameter spaces.
\end{corollary}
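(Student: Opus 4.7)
The plan is to invoke the unified representation established in Definition~\ref{def:roundabout_ccg} and Lemma~\ref{lem:rccg_derivation}, and then directly substitute the concentricity hypothesis $\mathbf{c}_o=\mathbf{c}_{in}=\mathbf{c}$ into the transformation mapping $\mathbf{h}(\bm{\beta}) = (\mathbf{c}_o - \mathbf{c}_{in}) + \mathbf{G}_o\bm{\beta}$. Since this mapping is the only place in \eqref{unified} where the inner and outer centers interact, the simplification propagates immediately into the exclusion clause.

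First, I would recall the set-difference formulation $\mathcal{R} = \mathcal{CG}_{\text{outer}}\setminus \mathcal{CG}_{\text{inner}}$ and write the exclusion condition in its raw form: $\mathbf{x}\notin\mathcal{CG}_{\text{inner}}$ iff there is no $\bm{\eta}$ satisfying $\mathbf{c}_{in}+\mathbf{G}_{in}\bm{\eta}=\mathbf{x}$ together with the inner norm and linear constraints. Using $\mathbf{x}=\mathbf{c}_o+\mathbf{G}_o\bm{\beta}$ from outer membership (with the outer norm and linear constraints), the coupling equation becomes $\mathbf{G}_{in}\bm{\eta}=(\mathbf{c}_o-\mathbf{c}_{in})+\mathbf{G}_o\bm{\beta}$, exactly as in \eqref{unified}.

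Next, I would apply the hypothesis $\mathbf{c}_o=\mathbf{c}_{in}=\mathbf{c}$, so that $\mathbf{c}_o-\mathbf{c}_{in}=\mathbf{0}$ and $\mathbf{h}(\bm{\beta})=\mathbf{G}_o\bm{\beta}$. Substituting into the exclusion clause and writing the outer/inner parameter constraints compactly as $\bm{\beta}\in\mathcal{C}_o(\mathcal{P}_o,\mathbf{A}_o,\mathbf{b}_o)$ and $\bm{\eta}\in\mathcal{C}_{in}(\mathcal{P}_{in},\mathbf{A}_{in},\mathbf{b}_{in})$ yields the stated form. Both inclusions then follow immediately from the corresponding inclusions in Lemma~\ref{lem:rccg_derivation}, specialized to the concentric case.

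There is essentially no substantive obstacle: the corollary is a direct specialization of Lemma~\ref{lem:rccg_derivation}, and the only bookkeeping is verifying that the outer generator $\mathbf{G}_o$ still governs both the point representation $\mathbf{c}+\mathbf{G}_o\bm{\beta}$ and the right-hand side of the coupling equation, while the inner generator $\mathbf{G}_{in}$ remains on the left-hand side. The proof is therefore two lines of substitution followed by a pointer to the preceding lemma for exactness.
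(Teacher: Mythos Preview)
Your approach is correct and matches the paper's treatment: the paper states this corollary without proof, treating it as an immediate specialization of the unified representation in Definition~\ref{def:roundabout_ccg} and Lemma~\ref{lem:rccg_derivation}, which is precisely what you do by substituting $\mathbf{c}_o-\mathbf{c}_{in}=\mathbf{0}$ into $\mathbf{h}(\bm{\beta})$. There is nothing to add.
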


The most significant simplification occurs when both the center and the generator matrices coincide.

\begin{corollary}[Common Center and Generator RCG]
\label{cor:common_center_generator}
When $\mathbf{c}_o = \mathbf{c}_{in} = \mathbf{c}$ and $\mathbf{G}_o = \mathbf{G}_{in} = \mathbf{G}$, the roundabout constrained convex generators reduces to:
\begin{equation}
\begin{aligned}
\mathcal{R} = \Bigl\{ &\mathbf{c} + \mathbf{G}\bm{\beta} \;\Big|\; \\
&\bm{\beta} \in \mathcal{C}_o(\mathcal{P}_o, \mathbf{A}_o, \mathbf{b}_o), \\
&\nexists \bm{\eta} \in \mathcal{C}_{in}(\mathcal{P}_{in}, \mathbf{A}_{in}, \mathbf{b}_{in}): \mathbf{G}\bm{\eta} = \mathbf{G}\bm{\beta}
\Bigr\}.
\end{aligned}
\end{equation}

When $\mathbf{G}$ has full column rank (i.e., $\ker(\mathbf{G}) = \{0\}$), which commonly occurs when the number of generators is large, the equation $\mathbf{G}\bm{\eta} = \mathbf{G}\bm{\beta}$ has at most one solution: $\bm{\eta} = \bm{\beta}$. In this case, for the special instance of RCGs (no constraints), the representation simplifies to:
\begin{equation}
\mathcal{R} = \{ \mathbf{c} + \mathbf{G}\bm{\beta} \mid r_{_i} \leq \|\bm{\beta_{\mathcal{J}_i}}\|_{p_i} \leq 1 \}.
\end{equation}
This yields an exact parameter space representation where the annular structure is directly encoded through the norm bounds.
\end{corollary}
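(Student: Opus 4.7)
The plan is to derive the corollary by substituting the common-center, common-generator hypotheses directly into the unified RCG representation of Definition~\ref{def:roundabout_ccg}, or equivalently starting from the concentric form in Corollary~\ref{cor:concentric}, and then exploiting the injectivity of $\mathbf{G}$ when it has full column rank. The work is essentially algebraic simplification of the transformation map $\mathbf{h}(\bm{\beta})$ followed by a kernel argument, so no new set-theoretic machinery is required.

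First, I would specialize the transformation mapping $\mathbf{h}(\bm{\beta})=(\mathbf{c}_o-\mathbf{c}_{in})+\mathbf{G}_o\bm{\beta}$ appearing in \eqref{unified}. With $\mathbf{c}_o=\mathbf{c}_{in}$ the offset vanishes, and with $\mathbf{G}_o=\mathbf{G}_{in}=\mathbf{G}$ the inner existence equation $\mathbf{H}\bm{\eta}=\mathbf{h}(\bm{\beta})$ collapses to $\mathbf{G}\bm{\eta}=\mathbf{G}\bm{\beta}$. Reading off the remaining outer constraints on $\bm{\beta}$ and inner constraints on $\bm{\eta}$ from \eqref{unified} then yields the first displayed form of the corollary verbatim; this step is essentially bookkeeping.

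Second, I would invoke the full column rank hypothesis $\ker(\mathbf{G})=\{\mathbf{0}\}$. Since $\mathbf{G}$ is injective, $\mathbf{G}\bm{\eta}=\mathbf{G}\bm{\beta}$ admits at most one solution, namely $\bm{\eta}=\bm{\beta}$. Hence the non-existence clause simplifies to the statement that $\bm{\beta}$ itself fails the inner constraint bundle $\mathcal{C}_{in}(\mathcal{P}_{in},\mathbf{A}_{in},\mathbf{b}_{in})$. In the unconstrained specialization flagged in the corollary ($\mathbf{A}_o,\mathbf{A}_{in}$ absent, matching partitions $\mathcal{J}_i=\mathcal{K}_i$ and norms $p_i=q_i$), this reduces to the requirement that $\|\bm{\beta}_{\mathcal{J}_i}\|_{p_i}>r_i$ on the relevant block, which combined with the outer upper bound yields the annular parameter region $r_i\le\|\bm{\beta}_{\mathcal{J}_i}\|_{p_i}\le 1$.

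The main obstacle will be the logical form of the last step: negating a universally quantified inner conjunction produces an existentially quantified disjunction, so the compact display $r_i\le\|\bm{\beta}_{\mathcal{J}_i}\|_{p_i}\le 1,\;\forall i$ faithfully describes the single-partition case $k_o=k_{in}=1$ but not the fully general multi-partition setting. I would resolve this by stating the final simplification under the assumption of a single partition (consistent with the ``annular'' geometric reading in the corollary), and remarking that in the multi-partition case the same argument still holds but the resulting region is more naturally written as the outer constraint set with the inner box removed, i.e.\ a finite union of strata of the above form. A secondary but minor point to verify is that $\mathbf{G}$ full column rank is compatible with the ``large number of generators'' comment: since $\mathbf{G}\in\mathbb{R}^{n\times m}$, full column rank in fact requires $m\le n$, so I would either tighten this remark or reinterpret it through the augmented matrix that stacks $\mathbf{G}$ with the equality constraint rows.
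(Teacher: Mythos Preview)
Your derivation is correct and matches what the paper implicitly relies on: the corollary is stated in the paper without any accompanying proof, as an immediate specialization of the unified representation in Definition~\ref{def:roundabout_ccg} (equivalently, of Corollary~\ref{cor:concentric} with $\mathbf{G}_o=\mathbf{G}_{in}$), followed by the trivial kernel argument when $\mathbf{G}$ is injective. Your two-step plan---collapse $\mathbf{h}(\bm{\beta})$ to $\mathbf{G}\bm{\beta}$, then use $\ker(\mathbf{G})=\{\mathbf{0}\}$ to force $\bm{\eta}=\bm{\beta}$---is exactly that specialization written out.

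Your two critical observations go beyond the paper. First, you are right that negating the inner conjunction over $j=1,\ldots,k_{in}$ yields a disjunction, so the compact annular display $r_i\le\|\bm{\beta}_{\mathcal{J}_i}\|_{p_i}\le 1$ for all $i$ is only literally correct in the single-partition case; the paper does not flag this and simply writes the annular bound. Second, your point that full column rank of $\mathbf{G}\in\mathbb{R}^{n\times m}$ requires $m\le n$, contradicting the parenthetical ``commonly occurs when the number of generators is large,'' is a genuine inconsistency in the statement that the paper leaves unaddressed. Both remarks sharpen the corollary rather than expose a gap in your argument.
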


These special cases arise frequently in applications where symmetry or structural regularity is present. The common generator case often appears in uncertainty propagation where the same basis directions are used with different magnitudes, while the concentric case naturally emerges in tolerance analysis and safety verification around equilibrium points. The common center and generator case provides the most computationally efficient representation, particularly when full column rank allows for direct parameter space characterization without requiring iterative feasibility checks.

\subsection{Illustrative Example}

To demonstrate the flexibility of  RCGs, we present an example where both the outer and inner boundaries follow a $2$-norm, creating concentric elliptical shapes by using CORA~\cite{althoff2015introduction} in MATLAB.

\begin{figure}[h]
\centering
\includegraphics[width=0.49\textwidth]{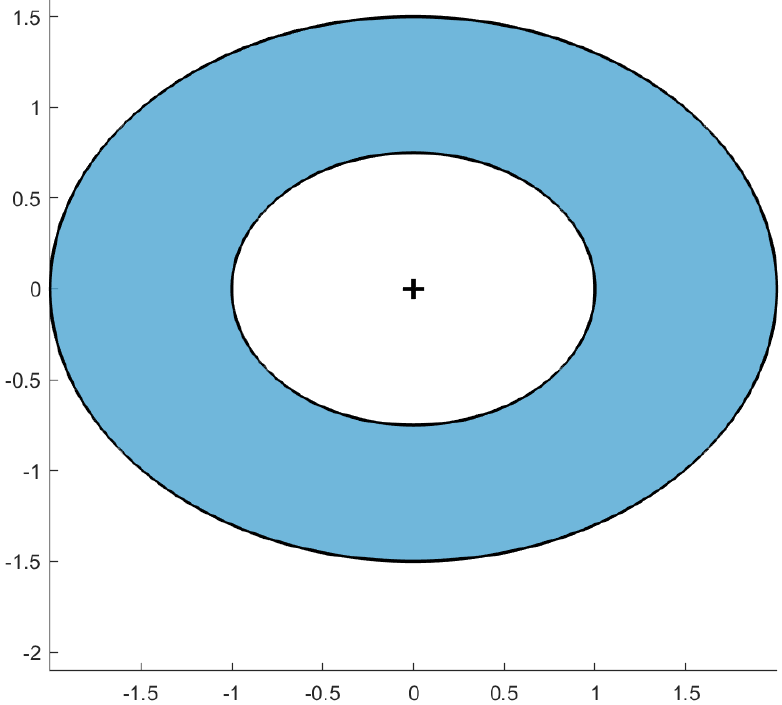}
\caption{A $\mathcal{RE}$ with elliptical outer boundary ($p = 2$) and elliptical inner boundary ($p = 2$). The blue region represents the feasible set $\mathcal{RE}$.}
\label{fig:re}
\end{figure}

\begin{example}[$\mathcal{RE}$ with Common Center and Common Generators]
\label{ex:elliptical_donut}
Consider a  RCG with the following parameters:
\begin{align}
\mathbf{c}_{\text{out}} = \mathbf{c}_{\text{in}} &= \begin{bmatrix} 0 \\ 0 \end{bmatrix},\quad  \mathbf{G}_{\text{out}} = \mathbf{G}_{\text{in}} = \begin{bmatrix} 2 & 0 \\ 0 & 1.5 \end{bmatrix}, \nonumber\\
\mathcal{J}_{\text{out}} = \mathcal{J}_{\text{in}} &= \{1, 2\},\quad 
p_{\text{out}} = p_{\text{in}} = 2, \quad 
\mathbf{r} = 0.5.
\end{align}

This creates a $\mathcal{RE}$ where:
\begin{itemize}
\item The outer constraint is defined by $\|\bm{\beta}\|_{2} \leq 1$
\item The inner constraint is defined by $0.5 \leq \|\bm{\beta}\|_2 $
\end{itemize}
\end{example}

Figure~\ref{fig:re} illustrates this example.

\begin{example}[Elliptical Outer and Polygonal Inner  RCG]
\label{ex:square_circular_donut}
Consider a  RCG with the following parameters:
\begin{align}
\mathbf{c}_{\text{out}} &= \begin{bmatrix} 0 \\ 0 \end{bmatrix}, 
\mathbf{c}_{\text{in}} = \begin{bmatrix} 1.2 \\ 1 \end{bmatrix},
\mathbf{G}_{\text{out}} =
\mathbf{G}_{\text{in}} = \begin{bmatrix} 3 & 0 & 1 \\ 0 & 2 & 1 \end{bmatrix}, \nonumber\\
\mathcal{J}_{\text{out}} &= \{1:3\},
\mathcal{J}_{\text{in}} = \{1:3\}, p_{\text{out}} =  \infty,
p_{\text{in}} =2, r = 0.2, \nonumber\\
A_{\text{out}} &= \begin{bmatrix} 1 & 0.5 & 1 \end{bmatrix}, 
A_{\text{in}} = \begin{bmatrix} 1 & 0 & 1 \end{bmatrix}, b_{\text{out}} = b_{\text{in}} = 1. \quad
\end{align}

This creates a set with:
\begin{itemize}
    \item A zonotopic CCG defined by $\|\bm{\beta}\|_{\infty} \leq 1$.
    \item A elliptical CCG defined by $\|\bm{\beta}\|_2 \leq 0.2$.
\end{itemize}
\end{example}

\begin{figure}[h]
\centering
\includegraphics[width=0.45\textwidth]{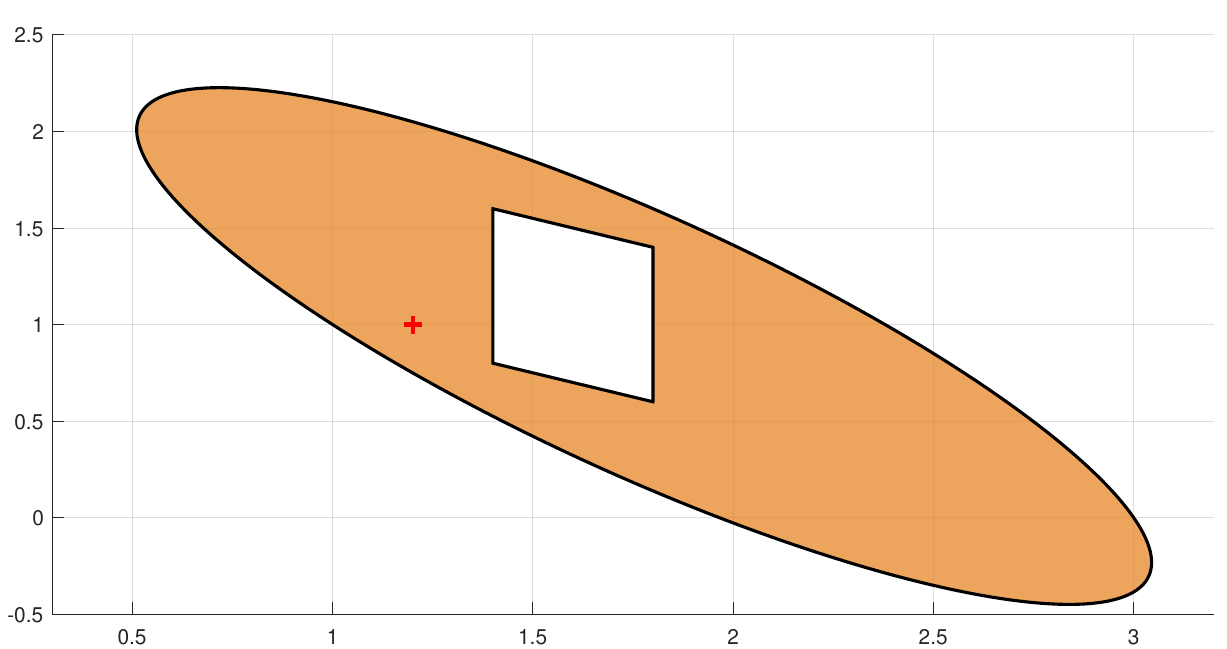}
\caption{RCG with elliptical outer boundary ($p = 2$) and polygonal inner boundary ($p = \infty$). The orange region represents the feasible set $\mathcal{R}$.}
\label{fig:square_circle_donut}
\end{figure}

Figure~\ref{fig:square_circle_donut} illustrates this example, showing how different $p$-norms create distinct geometric shapes for the inner and outer boundaries.

\begin{figure}[htbp]
\centering
\includegraphics[width=0.48\textwidth]{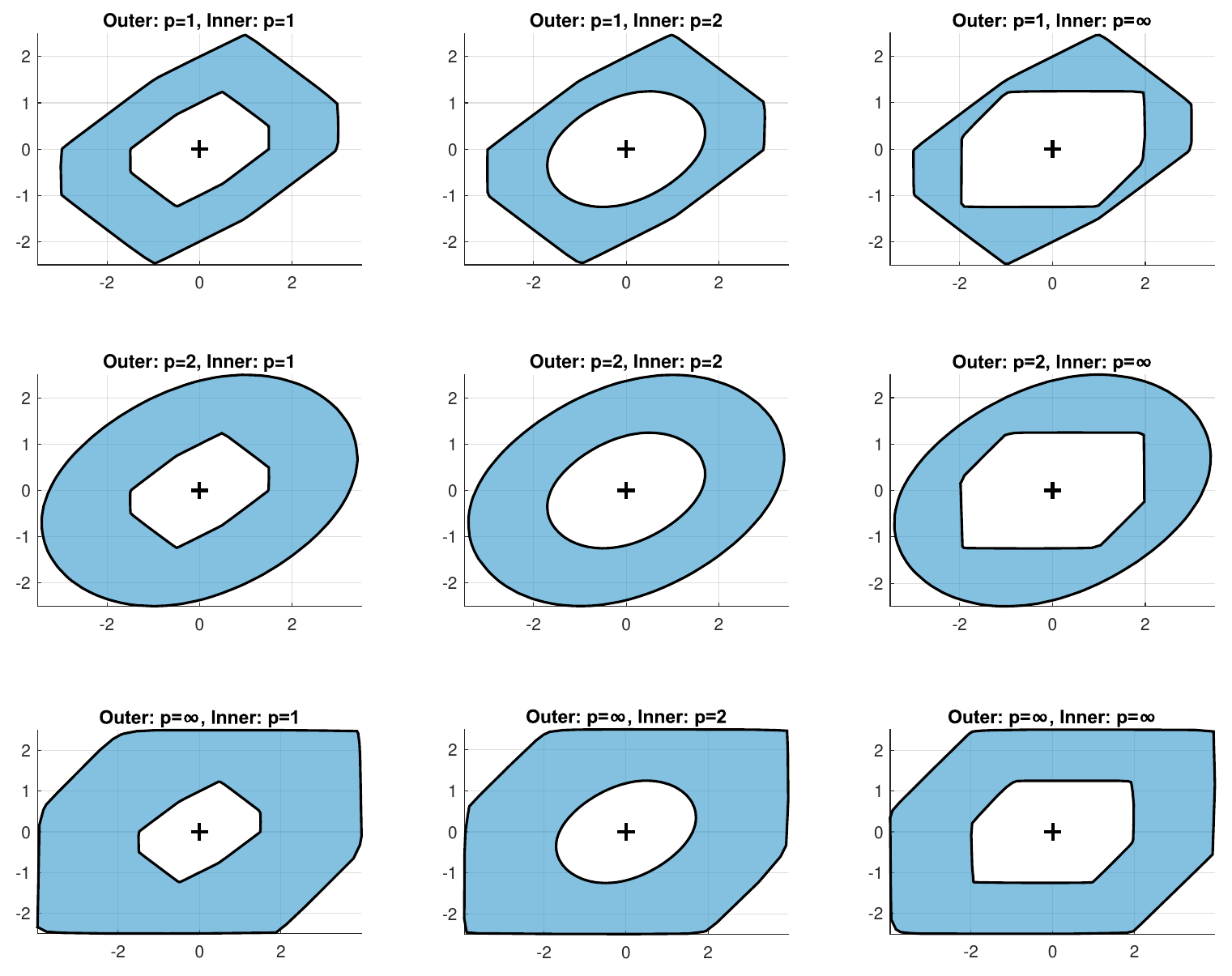}
\caption{Nine common  RCG configurations obtained by varying outer and inner $p$-norms. Columns correspond to outer boundary norms ($p = 1, 2, \infty$) and rows to inner boundary norms ($p = 1, 2, \infty$).}
\label{fig:donut_CCGs_grid}
\end{figure}

Figure~\ref{fig:donut_CCGs_grid} illustrates the geometric versatility of  RCGs through systematic variation of the $p$-norm parameters. The $3 \times 3$ grid demonstrates all combinations of three fundamental norm choices with common center and common generators: $p = 1$, $p = 2$ (ellipse), and $p = \infty$ (polygon).



Based on the general framework established in Proposition~\ref{prop:rccg_ccg_intersection}, we can derive the intersection formula for roundabout zonotopes as a special case where all norm constraints employ the infinity norm and no additional linear constraints are present beyond those required for intersection.

\begin{corollary}[Intersection of Roundabout Zonotope with Zonotope]
\label{cor:rz_intersection}
Consider a roundabout zonotope $\mathcal{RZ} = \langle \mathbf{G}_o, \mathbf{G}_{in}, \mathbf{c}_o, \mathbf{c}_{in}, \mathbf{r} \rangle$ and a zonotope $\mathcal{Y} = \{\mathbf{c}_y + \mathbf{G}_y\bm{\gamma} \mid \|\bm{\gamma}\|_\infty \leq 1\}$. Their intersection yields:
\begin{equation}
\begin{aligned}
\mathcal{RZ} \cap \mathcal{Y} = \Bigl\{ &\mathbf{c}_o + \tilde{\mathbf{G}}\tilde{\bm{\beta}} \;\Big|\;\|\tilde{\bm{\beta}}\|_\infty \leq 1,\\
&[\mathbf{G}_o, -\mathbf{G}_y]\tilde{\bm{\beta}} = \mathbf{c}_y - \mathbf{c}_o, \\
&\nexists \bm{\eta}: \mathbf{G}_{in}\bm{\eta} = (\mathbf{c}_o - \mathbf{c}_{in}) + \mathbf{G}_o\bm{\beta}, \|\bm{\eta}\|_\infty \leq r_j
\Bigr\},
\end{aligned}
\end{equation}

where $\tilde{\bm{\beta}} = [\bm{\beta}^T, \bm{\gamma}^T]^T$ is the augmented parameter vector, $\tilde{\mathbf{G}} = [\mathbf{G}_o, \mathbf{0}]$ is the augmented generator matrix.
\end{corollary}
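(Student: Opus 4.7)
The plan is to derive Corollary~\ref{cor:rz_intersection} as a direct specialization of Proposition~\ref{prop:rccg_ccg_intersection} rather than re-proving set inclusions from scratch. The roundabout zonotope $\mathcal{RZ}$ fits the general RCG template of Definition~\ref{def:roundabout_ccg} with $\mathcal{P}_o=\mathcal{P}_{in}=\{\infty\}$, a single partition spanning all generator indices, and vacuous linear equality constraints (so $\mathbf{A}_o,\mathbf{b}_o,\mathbf{A}_{in},\mathbf{b}_{in}$ are absent). Likewise, the zonotope $\mathcal{Y}$ fits the CCG template of Definition~\ref{def:ccg} with $\mathcal{P}_{\mathrm{ccg}}=\{\infty\}$, a single partition, and no equality constraint. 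Under these assignments, Corollary~\ref{cor:rz_intersection} is nothing more than the general intersection formula with the inert rows and columns removed.

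First I would substitute these structural reductions into the general expression \eqref{eq:rcg_ccg_intersection_unified}. The augmented matrix $\tilde{\mathbf{A}}$ loses its first two block rows (those involving $\mathbf{A}_o$ and $\mathbf{A}_{\mathrm{ccg}}$), leaving only the coupling row $[\mathbf{G}_o,-\mathbf{G}_y]\tilde{\bm{\beta}}=\mathbf{c}_y-\mathbf{c}_o$, and $\tilde{\mathbf{b}}$ similarly collapses to $\mathbf{c}_y-\mathbf{c}_o$. The two families of partitioned norm constraints $\|\tilde{\bm{\beta}}_{\mathcal{J}_i}\|_{p_i}\le 1$ and $\|\tilde{\bm{\beta}}_{\mathcal{L}_j}\|_{p_j}\le 1$ merge into the single box constraint $\|\tilde{\bm{\beta}}\|_\infty\le 1$ because both blocks use the infinity norm on all indices. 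The augmented generator matrix $\tilde{\mathbf{G}}=[\mathbf{G}_o,\mathbf{0}]$ and the preserved center $\mathbf{c}_o$ carry over verbatim from the general formula.

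Next I would verify that the exclusion clause specializes as stated. In Proposition~\ref{prop:rccg_ccg_intersection}, the exclusion condition uses $\bm{\beta}=\Pi_\beta\tilde{\bm{\beta}}$, i.e.\ the first block of the augmented variable. After substitution it reads: there is no $\bm{\eta}$ with $\mathbf{G}_{in}\bm{\eta}=(\mathbf{c}_o-\mathbf{c}_{in})+\mathbf{G}_o\bm{\beta}$, $\|\bm{\eta}_{\mathcal{K}_\ell}\|_\infty\le r_\ell$, and (vacuously) $\mathbf{A}_{in}\bm{\eta}=\mathbf{b}_{in}$. This is exactly the inner clause displayed in the corollary; only the blanket infinity norm and the absence of $\mathbf{A}_{in}$ distinguish it from the general statement.

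The only potential friction is notational bookkeeping: one must confirm that the symbol $\bm{\beta}$ appearing inside the exclusion quantifier is understood as $\Pi_\beta\tilde{\bm{\beta}}$, and that the scale vector $\mathbf{r}$ is applied componentwise across the (possibly single) inner partition so that $\|\bm{\eta}\|_\infty\le r_j$ is interpreted groupwise. Because no new set-theoretic argument is required beyond what Proposition~\ref{prop:rccg_ccg_intersection} already provides, the corollary then follows by transcription, and I would close the proof by simply invoking the proposition with the norm and constraint reductions above.
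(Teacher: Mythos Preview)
Your proposal is correct and mirrors the paper's own treatment: the paper does not give a standalone proof of Corollary~\ref{cor:rz_intersection} but simply states that it is derived from Proposition~\ref{prop:rccg_ccg_intersection} as the special case where all norms are the infinity norm and no linear equality constraints are present beyond the coupling row. Your explicit walk-through of how $\tilde{\mathbf{A}}$, $\tilde{\mathbf{b}}$, the norm constraints, and the exclusion clause collapse under these reductions is exactly the specialization the paper has in mind, just spelled out in more detail.
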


\begin{figure*}[h!]
\centering
\includegraphics[width=0.96\textwidth]{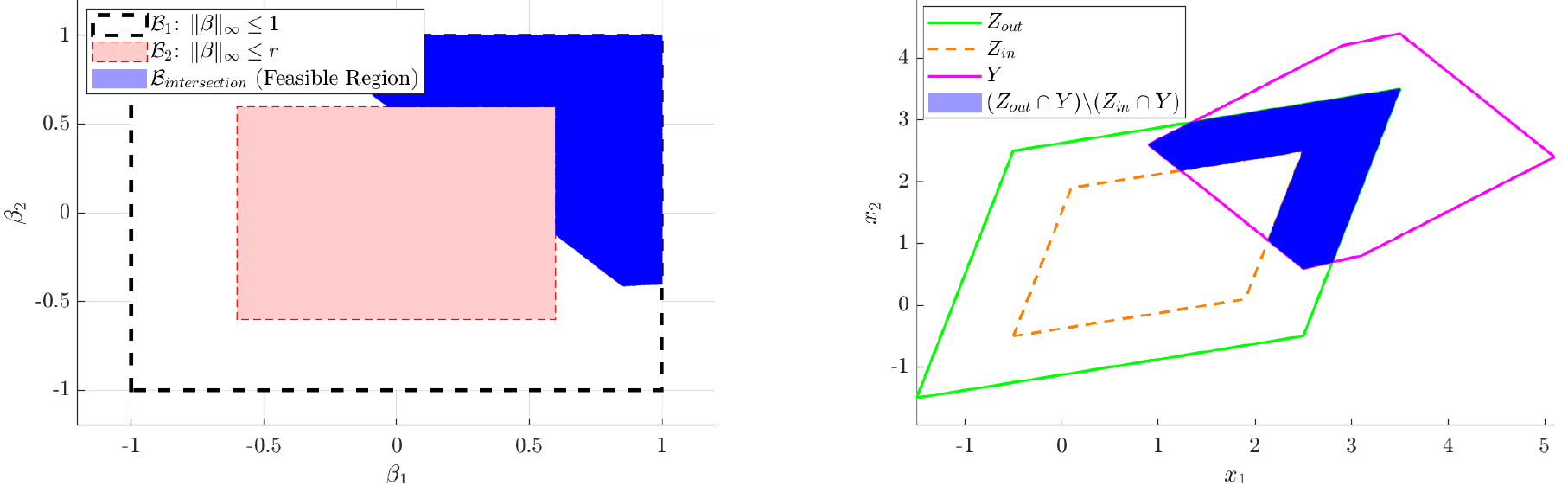}
\caption{Visualization of the roundabout zonotope intersection. Left: Parameter space $(\beta_1, \beta_2)$ showing the feasible region $\mathcal{B}_{\text{intersection}}$ (blue) as the intersection of the outer constraint $\mathcal{B}_1: \|\bm{\beta}\|_\infty \leq 1$ (black dashed box) with the constraint from $\mathcal{Y}$, while excluding the inner region $\mathcal{B}_2: \|\bm{\beta}\|_\infty \leq r$ (red dashed box). Right: The resulting geometric sets in $\mathbb{R}^2$ showing $\mathcal{Z}_{\text{out}}$ (green), $\mathcal{Z}_{\text{in}}$ (orange dashed), $\mathcal{Y}$ (magenta), and the final intersection $(\mathcal{Z}_{\text{out}} \cap \mathcal{Y}) \setminus (\mathcal{Z}_{\text{in}} \cap \mathcal{Y})$ (blue filled region).}
\label{fig:intersection_example}
\end{figure*}

\begin{remark}[Concentric Case with Shared Generators]
\label{rem:concentric_intersection}
When the outer and inner zonotopes share the same center and generator matrix ($\mathbf{c}_1 = \mathbf{c}_2 = \mathbf{c}$ and $\mathbf{G}_1 = \mathbf{G}_2 = \mathbf{G}$), the intersection formula simplifies considerably. In this case:

\begin{itemize}
\item The transformation mapping reduces to $\mathbf{h}(\tilde{\bm{\beta}}) = (\mathbf{c} - \mathbf{c}) + \mathbf{G}\bm{\beta} = \mathbf{G}\bm{\beta}$
\item The inner exclusion condition becomes: $\nexists \bm{\eta}: \mathbf{G}\bm{\eta} = \mathbf{G}\bm{\beta}$ with $\|\bm{\eta}\|_\infty \leq r$
\end{itemize}


Furthermore, when $\mathbf{G}$ has full column rank, we mention in ~\ref{cor:common_center_generator}. The condition reduces to simply $\|\bm{\beta}\|_\infty > r$, yielding the elegant form:
\begin{equation}
\begin{aligned}
\mathcal{RZ}\cap \mathcal{Y} = \Bigl\{ \mathbf{x} = \mathbf{c} + \mathbf{G}\bm{\beta} \;\Big|\; r < \|\bm{\beta}\|_\infty \leq 1, \\\exists \bm{\gamma}: [\mathbf{G}, -\mathbf{G}_y]\begin{bmatrix}\bm{\beta} \\ \bm{\gamma}\end{bmatrix} = \mathbf{c}_y - \mathbf{c}, \;\|\bm{\gamma}\|_\infty \leq 1 \Bigr\}.
\end{aligned}
\end{equation}
\end{remark}

\begin{example}[Numerical Illustration of Intersection]
\label{ex:intersection_numerical}
To illustrate the theoretical framework developed above, we present a concrete numerical example demonstrating the intersection of a ring-shaped roundabout zonotope with another zonotope. This example showcases both the parameter space constraints and the resulting geometric sets.

Consider a roundabout zonotope $\mathcal{RZ} = \mathcal{Z}_{\text{out}} \setminus \mathcal{Z}_{\text{in}}$ where both the outer and inner zonotopes share the same center and the same generator matrix (this case discussed in Corollary~\ref{cor:common_center_generator}). The system parameters are:

\begin{equation}
\mathbf{c} = \begin{bmatrix} 1 \\ 1 \end{bmatrix}, \quad 
\mathbf{G} = \begin{bmatrix} 2 & 0.5 \\ 0.5 & 2 \end{bmatrix}, \quad
r = 0.6.
\end{equation}
The second zonotope $\mathcal{Y}$ is defined with:
\begin{equation}
\mathbf{c}_y = \begin{bmatrix} 3 \\ 2.5 \end{bmatrix}, \quad
\mathbf{G}_y = \begin{bmatrix} 1 & -0.8 & 0.3 \\ 0.8 & 1 & 0.1 \end{bmatrix}.
\end{equation}

For this configuration, the feasible parameter set becomes:
\begin{equation}
\begin{aligned}
\mathcal{B}_{\text{intersection}} = \Bigl\{ \bm{\beta} \in \mathbb{R}^2 \;\Big|\; &0.6 < \|\bm{\beta}\|_\infty \leq 1, \\
&\exists \bm{\gamma} \in \mathbb{R}^3: \mathbf{G}\bm{\beta} - \mathbf{G}_y\bm{\gamma} = \mathbf{c}_y - \mathbf{c}, \\
&\|\bm{\gamma}\|_\infty \leq 1 \Bigr\}.
\end{aligned}
\end{equation}
The linear constraint $\mathbf{G}\bm{\beta} - \mathbf{G}_y\bm{\gamma} = \mathbf{c}_y - \mathbf{c}$ expands to:
\begin{equation}
\begin{bmatrix} 2 & 0.5 \\ 0.5 & 2 \end{bmatrix} \begin{bmatrix} \beta_1 \\ \beta_2 \end{bmatrix} - 
\begin{bmatrix} 1 & -0.8 & 0.3 \\ 0.8 & 1 & 0.1 \end{bmatrix} \begin{bmatrix} \gamma_1 \\ \gamma_2 \\ \gamma_3 \end{bmatrix} = 
\begin{bmatrix} 2 \\ 1.5 \end{bmatrix}.
\end{equation}

This system determines which values of $\bm{\beta}$ can be matched with valid $\bm{\gamma}$ parameters, effectively constraining the feasible region.

As illustrated in Figure~\ref{fig:intersection_example}, the parameter space visualization (left panel) reveals how the intersection constraint carves out a non-convex feasible region. The blue shaded area represents parameters $\bm{\beta}$ that simultaneously satisfy three conditions: they lie within the outer box constraint $\|\bm{\beta}\|_\infty \leq 1$, they avoid the inner exclusion region $\|\bm{\beta}\|_\infty \leq 0.6$, and they permit a valid correspondence with the zonotope $\mathcal{Y}$ through the linear constraint system.

The geometric representation (right panel) demonstrates the actual sets in $\mathbb{R}^2$. The outer zonotope $\mathcal{Z}_{\text{out}}$ (green boundary) and inner zonotope $\mathcal{Z}_{\text{in}}$ (orange dashed boundary) form the ring structure, while $\mathcal{Y}$ (magenta boundary) intersects this ring to produce the final blue region. This region maintains the roundabout zonotope structure, confirming our theoretical closure result ~\ref{prop:rccg_ccg_intersection}.

\end{example}

\section{Conclusion}
This paper introduced the Roundabout Constrained Convex Generators (RCGs) framework for representing regions with a single topological hole in control applications. The current work focuses specifically on regions containing exactly one hole; extensions to multiple or overlapping holes will be addressed through hybrid RCGs formulations in future work.
We established that RCG sets remain closed under linear transformations, Minkowski sums with CCGs, and intersections with CCGs. These closure properties provide the theoretical foundation for control synthesis within the RCG framework. Special cases including roundabout zonotopes and ellipsotopes offer computational and presentation advantages.

While this paper establishes the mathematical foundations of RCGs, implementation of reachability analysis algorithms remains future work. Next steps include developing efficient computational procedures for RCG operations, extending to hybrid formulations for multiple exclusion regions, and demonstrating applications in collision avoidance and robust control scenarios.

\bibliographystyle{IEEEtran} 
\bibliography{BibTex_2021}

\end{document}